\begin{document}

\title{ On supercritical nonlinear Schr\"{o}dinger equations with ellipse-shaped potentials}
\date{}
\maketitle

\vspace{ -1\baselineskip}

{\small
\begin{center}
{\sc  Jianfu Yang} \\
Department of Mathematics,
Jiangxi Normal University\\
Nanchang, Jiangxi 330022,
P.~R.~China\\
email: Jianfu Yang: jfyang\_2000@yahoo.com\\[10pt]
{\sc  Jinge Yang*} \\
School of Sciences,
Nanchang Institute of Technology\\
Nanchang 330099,
P.~R.~China\\
email: Jinge Yang: jgyang2007@yeah.net\\[10pt]

\end{center}
}

\renewcommand{\thefootnote}{}
\footnote{Key words: Nonlinear elliptic equation, $L^2$ supercritical, Constrained minimization, Gross¨CPitaevskii functional, Ellipse-shaped potential.}

\begin{quote}
{\bf Abstract.}\ \ In this paper, we study the existence and concentration of normalized solutions to the supercritical nonlinear Schr\"{o}dinger equation
\begin{equation*}
\left\{
\begin{array}{l}
-\Delta u + V(x) u = \mu_q u + a|u|^q u \quad {\rm in}\quad \mathbb{R}^2,\\
\int_{\mathbb{R}^2}|u|^2\,dx =1,\\
\end{array}
\right.
\end{equation*}
where $\mu_q$ is the Lagrange multiplier. For ellipse-shaped potentials $V(x)$, we show that for $q>2$ close to $2$, the equation  admits an excited solution $u_q$, and
furthermore, we study the limiting behavior of $u_q$ when $q\to 2_+$. Particularly, we describe precisely the blow-up formation of the excited state $u_q$.
\end{quote}

\newcommand{\N}{\mathbb{N}}
\newcommand{\R}{\mathbb{R}}
\newcommand{\Z}{\mathbb{Z}}

\newcommand{\cA}{{\mathcal A}}
\newcommand{\cB}{{\mathcal B}}
\newcommand{\cC}{{\mathcal C}}
\newcommand{\cD}{{\mathcal D}}
\newcommand{\cE}{{\mathcal E}}
\newcommand{\cF}{{\mathcal F}}
\newcommand{\cG}{{\mathcal G}}
\newcommand{\cH}{{\mathcal H}}
\newcommand{\cI}{{\mathcal I}}
\newcommand{\cJ}{{\mathcal J}}
\newcommand{\cK}{{\mathcal K}}
\newcommand{\cL}{{\mathcal L}}
\newcommand{\cM}{{\mathcal M}}
\newcommand{\cN}{{\mathcal N}}
\newcommand{\cO}{{\mathcal O}}
\newcommand{\cP}{{\mathcal P}}
\newcommand{\cQ}{{\mathcal Q}}
\newcommand{\cR}{{\mathcal R}}
\newcommand{\cS}{{\mathcal S}}
\newcommand{\cT}{{\mathcal T}}
\newcommand{\cU}{{\mathcal U}}
\newcommand{\cV}{{\mathcal V}}
\newcommand{\cW}{{\mathcal W}}
\newcommand{\cX}{{\mathcal X}}
\newcommand{\cY}{{\mathcal Y}}
\newcommand{\cZ}{{\mathcal Z}}

\newcommand{\abs}[1]{\lvert#1\rvert}
\newcommand{\xabs}[1]{\left\lvert#1\right\rvert}
\newcommand{\norm}[1]{\lVert#1\rVert}

\newcommand{\loc}{\mathrm{loc}}
\newcommand{\p}{\partial}
\newcommand{\h}{\hskip 5mm}
\newcommand{\ti}{\widetilde}
\newcommand{\D}{\Delta}
\newcommand{\e}{\epsilon}
\newcommand{\bs}{\backslash}
\newcommand{\ep}{\emptyset}
\newcommand{\su}{\subset}
\newcommand{\ds}{\displaystyle}
\newcommand{\ld}{\lambda}
\newcommand{\vp}{\varphi}
\newcommand{\wpp}{W_0^{1,\ p}(\Omega)}
\newcommand{\ino}{\int_\Omega}
\newcommand{\bo}{\overline{\Omega}}
\newcommand{\ccc}{\cC_0^1(\bo)}
\newcommand{\iii}{\opint_{D_1}D_i}

\theoremstyle{plain}
\newtheorem{Thm}{Theorem}[section]
\newtheorem{Lem}[Thm]{Lemma}
\newtheorem{Def}[Thm]{Definition}
\newtheorem{Cor}[Thm]{Corollary}
\newtheorem{Prop}[Thm]{Proposition}
\newtheorem{Rem}[Thm]{Remark}
\newtheorem{Ex}[Thm]{Example}

\numberwithin{equation}{section}
\newcommand{\meas}{\rm meas}
\newcommand{\ess}{\rm ess} \newcommand{\esssup}{\rm ess\,sup}
\newcommand{\essinf}{\rm ess\,inf} \newcommand{\spann}{\rm span}
\newcommand{\clos}{\rm clos} \newcommand{\opint}{\rm int}
\newcommand{\conv}{\rm conv} \newcommand{\dist}{\rm dist}
\newcommand{\id}{\rm id} \newcommand{\gen}{\rm gen}
\newcommand{\opdiv}{\rm div}

\vskip 0.2cm \arraycolsep1.5pt
\newtheorem{Lemma}{Lemma}[section]
\newtheorem{Theorem}{Theorem}[section]
\newtheorem{Definition}{Definition}[section]
\newtheorem{Proposition}{Proposition}[section]
\newtheorem{Remark}{Remark}[section]
\newtheorem{Corollary}{Corollary}[section]

\section{Introduction}

\bigskip

In this paper, we study the following supercritical Gross-Pitaevskii (GP) type equation
\begin{equation}\label{eq:1.6b}
-\Delta u + V(x) u = \mu_q u + a|u|^q u \quad {\rm in}\quad \mathbb{R}^2,
\end{equation}
that is, $q>2$, with the ellipse-shaped potential
\begin{equation}\label{eq:1.6c}
V(x)=\left( \sqrt{\frac{x_1^2}{b_1^2}+\frac{x_2^2}{b_2^2}}-A\right)^2,
\end{equation}
where $b_1>b_2>0,\ \ A>0$ are constants, and $\mu_q$ is the Lagrange multiplier.

In the case  $q=2$, equation \eqref{eq:1.6b} stems from the study of Bose-Einstein condensation. It was derived independently by Gross and Pitaevskii, and it  is the main theoretical tool for investigating nonuniform dilute Bose gases at low temperatures. Especially, equation \eqref{eq:1.6b} is called the Gross-Pitaevskii equation.

In general, problem \eqref{eq:1.6b} is associated with critical points of the energy functional
\begin{equation}\label{eq:1.3}
E_{a,q}(u)=\frac{1}{2}\int_{\mathbb{R}^2}\big(|\nabla u(x)|^2+V(x)|u(x)|^2\big)\,dx-\frac{a}{q+2}\int_{\mathbb{R}^2}|u|^{q+2}\,dx
\end{equation}
under the constraint
\begin{equation}\label{eq:1.3a}
\int_{\mathbb{R}^2}|u(x)|^2\,dx=1.
\end{equation}
 In particular, in the critical or subcritical case, that is, $q=2$ or $0<q<2$, the functional $E_{a,q}$ is bounded below if $0<a<a^*$ for some $a^*>0$.
Hence, the constrained minimization problem
\begin{equation}\label{eq:1.5}
d_a(q):=\inf_{u\in \mathcal{H}, \ \int_{\mathbb{R}^2}|u|^2\,dx=1}E_{a,q}(u)
\end{equation}
is well defined, where $\mathcal{H}$ is defined by
\[
\mathcal{H}:=\Big\{u\in H^1(\mathbb{R}^2):\int_{\mathbb{R}^2}V(x)|u(x)|^2\,dx<\infty\Big\}.
\]

In the critical case $q = 2$, for $a>0$ the system of Bose-Einstein condensates  collapses if the particle number increases beyond a critical
value; see \cite{DGPS, HMDBB, KMS, SSH} etc. Mathematically, it was proved in \cite{GS} that for a non-negative potential $V(x)$ with finite number zero points, there exists a threshold value $a^*>0$ such that $d_a(2)$ is achieved if $0<a<a^*$,  and
there is no minimizer for $d_a(2)$ if $a\geq a^*$. The threshold value $a^*$ is determined in terms of the solution of the nonlinear scalar field equation
\begin{equation}\label{eq:1.7}
-\Delta u+u=u^3\ \ {\rm in} \ \ \mathbb{R}^2,\ \ u\in H^1(\mathbb{R}^2).
\end{equation}
It is known from \cite{K} that problem \eqref{eq:1.7} admits a unique positive solution up to translations. Such a solution is radially symmetric and exponentially decaying at infinity, see for instance, \cite{BL}. Denote by $Q$ in the sequel the positive solution of \eqref{eq:1.7}. It was found in \cite{GS} that
\begin{equation}\label{eq:1.6}
a^*:=\|Q\|_{L^2(\mathbb{R}^2)}^2.
\end{equation}

The similar symmetry breaking phenomenon was considered in \cite{GZZ} in the subcritical case, i.e. $0<q<2$, for the functional  $E_{a,q}$. When $q$ approaching $2$, the limit behavior  of the minimizer of $E_{a,q}$ constrained by \eqref{eq:1.3a} is described by the unique positive solution $\varphi_q$ of the nonlinear scalar field equation
\begin{equation}\label{eq:1.9}
-\Delta u+\frac{2}{q}u=\frac{2}{q}u^{q+1}, \ \ q>0, \ \ u\in H^1(\mathbb{R}^2)
\end{equation}
in terms of
\begin{equation}\label{eq:1.6a}
a_q^*=\|\varphi_q\|_2^q.
\end{equation}

If the non-negative potential $V(x)$ has infinitely many minima, or particular $V(x)=(|x|-A)^2$, it was shown in \cite{GZZ1} that the
symmetry breaking occurs in the GP minimizers. That is, any
non-negative minimizer of $E_{a,2}$ concentrates at a point on the ring $\{x\in\mathbb{R}^2: |x|=A\}$ as $a\to a^*_-$. Such a result was generalized to the case that the potential $V(x)$ is ellipse-shaped in \cite{GZ}.

The situation becomes different if we turn to the supercritical case $q>2$. Arguments for critical and subcritical cases can not be carried through for the supercritical case, since in the supercritical case the functional $E_{a,q}(u)$ is not bounded below on
the manifold
\[
S(1)=\{u\in \mathcal{H}: \int_{\mathbb{R}^2} |u|^2\,dx = 1\},
\]
and the minimization problem \eqref{eq:1.5} is not well defined.

In this paper, we focus on the supercritical problem \eqref{eq:1.6b} with ellipse-shaped potentials. Although in this case, there is no minimizer for the minimization problem \eqref{eq:1.5}, or no ground state solution for
\eqref{eq:1.6b}, we can find critical points of $E_{a,q}$ constrained on the manifold $S(1)$.
Such a critical point is an excited state solution of \eqref{eq:1.6b}. Actually, for the supercritical case, the functional $E_{a,q}$ has the mountain pass geometry on $S(1)$. This was
revealed in \cite{BV,JE}, and developed to be applied to various problems, see \cite{BS,BJ} etc.  We first look for critical points of $E_{a,q}$ on $S(1)$ by the variant mountain pass theorem, then we investigate the asymptotic behavior of the critical points when $q$ tends to 2.

\bigskip
In the sequel, we denote $|x|_b=\sqrt{b_1^{-2}x_1^2+b_2^{-2}x_2^2}$. Choose $a\in (0,a^*)$ and denote
\begin{equation}\label{eq:1.11}
{\tau_q}=\Big(\frac{2a_q^*}{qa}\Big)^{\frac{1}{q-2}}
\end{equation}
in the sequel. Our existence results are stated as follows.

\begin{Theorem}\label{thm1}
Let $0<a<a^*$. There exists an $\varepsilon_0>0$ such that, for any $q \in (2,2+\varepsilon_0)$, $E_{a, q}(u)$ admits a nonnegative critical point $u_q$ at mountain level on $S(1)$.
\end{Theorem}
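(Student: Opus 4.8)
The plan is to obtain $u_q$ as a mountain pass critical point of $E_{a,q}$ on the $L^2$-sphere $S(1)$, following the scheme of \cite{BV,JE} adapted to the confining potential $V$. Throughout write $\Sigma(u)=\int_{\mathbb{R}^2}\big(|\nabla u|^2+V|u|^2\big)\,dx$, so that $E_{a,q}(u)=\tfrac12\Sigma(u)-\tfrac{a}{q+2}\|u\|_{q+2}^{q+2}$, and recall that, since $V(x)\to\infty$ as $|x|\to\infty$, the space $\mathcal{H}$ embeds compactly into $L^p(\mathbb{R}^2)$ for every $p\in[2,\infty)$. Fix once and for all a nonnegative $u_0\in C_c^\infty(\mathbb{R}^2)$ with $\|u_0\|_2=1$ and set $C_0:=\tfrac12\Sigma(u_0)$, so that $E_{a,q}(u_0)\le C_0$ for all $q\in(2,3)$. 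For $s\in\mathbb{R}$ let $(s\star u)(x):=e^{s}u(e^{s}x)$; this preserves the $L^2$-norm, while $\|\nabla(s\star u_0)\|_2=e^{s}\|\nabla u_0\|_2\to\infty$ and $E_{a,q}(s\star u_0)\to-\infty$ as $s\to+\infty$ (here $q>2$ is used).

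The mountain pass geometry rests on the sharp Gagliardo-Nirenberg inequality attached to \eqref{eq:1.9}, namely $\|u\|_{q+2}^{q+2}\le\tfrac{q+2}{2a_q^*}\|\nabla u\|_2^{q}\|u\|_2^{2}$, which on $S(1)$ gives, using $V\ge0$,
\[
E_{a,q}(u)\ \ge\ \tfrac12\|\nabla u\|_2^{2}-\tfrac{a}{2a_q^*}\|\nabla u\|_2^{q}\ =:\ g_q\big(\|\nabla u\|_2\big).
\]
The function $g_q$ is increasing on $[0,\tau_q]$, with $\tau_q$ as in \eqref{eq:1.11}, and $g_q(\tau_q)=\big(\tfrac12-\tfrac1q\big)\tau_q^{2}$. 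Since $0<a<a^*$ and $a_q^*=\|\varphi_q\|_2^{q}\to\|Q\|_2^{2}=a^*$ as $q\to2_+$, we have $\tfrac{2a_q^*}{qa}\to\tfrac{a^*}{a}>1$, so $\tau_q\to\infty$ and $g_q(\tau_q)\to\infty$ as $q\to2_+$. Choose $\varepsilon_0>0$ so small that $\tau_q>\|\nabla u_0\|_2$ and $g_q(\tau_q)>C_0+1$ for every $q\in(2,2+\varepsilon_0)$. As $g_q$ is increasing on $[0,\tau_q]$ and $g_q(\|\nabla u_0\|_2)\le\tfrac12\|\nabla u_0\|_2^{2}\le C_0$, there is $R_q\in\big(\|\nabla u_0\|_2,\tau_q\big)$ with $g_q(R_q)=C_0+1$, and hence
\[
\inf\big\{E_{a,q}(u):\ u\in S(1),\ \|\nabla u\|_2=R_q\big\}\ \ge\ C_0+1 .
\]
Fix $t_+>0$ with $\|\nabla(t_+\star u_0)\|_2>R_q$ and $E_{a,q}(t_+\star u_0)<0$, and put $u_1:=t_+\star u_0\ge0$. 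Since $t\mapsto\|\nabla\gamma(t)\|_2$ is continuous and $\|\nabla u_0\|_2<R_q<\|\nabla u_1\|_2$, every path in
\[
\Gamma:=\big\{\gamma\in C([0,1],S(1)):\ \gamma(0)=u_0,\ \gamma(1)=u_1\big\}
\]
meets the sphere $\{\|\nabla u\|_2=R_q\}$, so the minimax level
\[
c_q:=\inf_{\gamma\in\Gamma}\ \max_{t\in[0,1]}E_{a,q}(\gamma(t))
\]
satisfies $C_0+1\le c_q<\infty$ and $c_q>\max\{E_{a,q}(u_0),E_{a,q}(u_1)\}$ (finiteness and $\Gamma\neq\emptyset$ via the path $\theta\mapsto(\theta t_+)\star u_0$). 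This is the required mountain pass structure.

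To extract a critical point at level $c_q$ I would use the scaling device of \cite{JE} (see also \cite{BS,BJ}): applying the minimax to $I(u,s):=E_{a,q}(s\star u)$ on $S(1)\times\mathbb{R}$, whose minimax level is again $c_q$, yields (after the standard argument controlling the scaling parameters) a Palais-Smale sequence $\{v_n\}\subset S(1)$ for $E_{a,q}$ at level $c_q$ with the extra property $P(v_n)\to0$, where
\[
P(v)=\|\nabla v\|_2^{2}-\tfrac12\int_{\mathbb{R}^2}(x\cdot\nabla V)|v|^{2}\,dx-\tfrac{aq}{q+2}\|v\|_{q+2}^{q+2}
\]
is obtained by differentiating $s\mapsto E_{a,q}(s\star v)$ at $s=0$. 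A direct computation from \eqref{eq:1.6c} shows that $\tfrac12 V+\tfrac{1}{2q}(x\cdot\nabla V)$ is bounded below on $\mathbb{R}^2$; together with the identity
\[
E_{a,q}(v_n)-\tfrac1q P(v_n)=\Big(\tfrac12-\tfrac1q\Big)\|\nabla v_n\|_2^{2}+\int_{\mathbb{R}^2}\Big(\tfrac12 V+\tfrac{1}{2q}\,x\cdot\nabla V\Big)|v_n|^{2}\,dx
\]
and $q>2$, this forces $\{\|\nabla v_n\|_2\}$ to be bounded; then $\{\|v_n\|_{q+2}\}$ is bounded by Gagliardo-Nirenberg and $\{\Sigma(v_n)\}$ by the energy bound, so $\{v_n\}$ is bounded in $\mathcal{H}$. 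Passing to a subsequence, $v_n\rightharpoonup v$ in $\mathcal{H}$ and $v_n\to v$ in $L^2$ and in $L^{q+2}$, whence $\|v\|_2=1$. The constrained criticality gives $-\Delta v_n+Vv_n-a|v_n|^{q}v_n-\mu_n v_n\to0$ in $\mathcal{H}^{*}$ with $\mu_n=\Sigma(v_n)-a\|v_n\|_{q+2}^{q+2}+o(1)$ bounded; letting $n\to\infty$ we obtain $\mu_n\to\mu_q$ and that $v$ solves \eqref{eq:1.6b} with Lagrange multiplier $\mu_q$. Pairing the equations for $v_n$ and for $v$ with $v_n$ and $v$ respectively and using the strong $L^2$ and $L^{q+2}$ convergence yields $\Sigma(v_n)\to\Sigma(v)$, hence $v_n\to v$ in $\mathcal{H}$ and $E_{a,q}(v)=c_q$. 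Finally, since $\big|\nabla|u|\big|\le|\nabla u|$ a.e. and $E_{a,q}(|u|)\le E_{a,q}(u)$, the whole construction may be carried out within the cone of nonnegative functions (the substitution $\gamma\mapsto|\gamma|$ preserves $\Gamma$ and the sphere-crossing property, so $c_q$ is unchanged), so that $v$ may be taken nonnegative; we set $u_q:=v$.

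The decisive step is the geometry of the second paragraph. For a confining potential the usual mountain pass picture near $\|\nabla u\|_2=0$ is unavailable: a small Dirichlet energy forces $\int V|u|^2$ to be large, so the energy has no low valley there. What rescues the argument is that $\tau_q\to\infty$ as $q\to2_+$, so that the lower bound $g_q(R_q)$ on a suitable sphere can be made to exceed the base-point energy $C_0$; this divergence uses simultaneously $q\to2_+$ and $a<a^*$ (through $a_q^*\to a^*>a$), which is precisely the role of $\varepsilon_0$. The other genuinely technical point is the precompactness of the Pohozaev-Palais-Smale sequence, where $q>2$ enters (to make $E_{a,q}-\tfrac1q P$ coercive in the gradient) together with the compact embedding of $\mathcal{H}$.
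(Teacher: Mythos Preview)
Your proposal is correct and follows essentially the same approach as the paper: the Gagliardo--Nirenberg lower bound $E_{a,q}(u)\ge g_q(\|\nabla u\|_2)$ together with $\tau_q\to\infty$ (which is exactly where $a<a^*$ enters via $a_q^*\to a^*$) gives the mountain pass geometry, Jeanjean's scaling trick on $S(1)\times\mathbb{R}$ yields a Palais--Smale sequence with $P(v_n)\to0$, the identity $E_{a,q}-\tfrac1q P$ combined with the lower bound on $\tfrac12V+\tfrac{1}{2q}x\cdot\nabla V$ gives boundedness, and the compact embedding $\mathcal{H}\hookrightarrow L^p$ closes the argument. The only point worth tightening is the nonnegativity: replacing $\gamma$ by $|\gamma|$ shows $c_q$ is attained over nonnegative paths, but to conclude that the \emph{critical point} is nonnegative one needs the localized version of the deformation/minimax (as in Ghoussoub \cite{GH}), which produces a PS sequence close to a prescribed almost-optimal nonnegative path; the paper makes this step explicit via \eqref{1.1-8}, whereas you absorb it into ``the whole construction may be carried out within the cone''.
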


One may verify that the functional $E_{a, q}(u)$ has the mountain pass geometry on $S(1)$. Then it is standard to show that $E_{a, q}(u)$ has a Palais-Smale $(PS)$ sequence at the mountain pass level. However, such a $(PS)$ sequence may fail to be bounded. In order to bound $(PS)$ sequence, we use a variant mountain pass theorem inspired of \cite{JE}.

\bigskip

Next, we study the limit behavior of  mountain pass point $u_q$ as $q\rightarrow 2_+$.

\begin{Theorem}\label{thm3} Suppose $0<a<a^*$. Then, for any sequence $\{q_k\}$ with $q_k\to  2_+$ as $k\rightarrow \infty$, there exist $\{x_k\}\subset \mathbb{R}^2$, $\beta>0$ and a subsequence of $\{q_k\}$, still denoted by $\{q_k\}$, such that
\begin{equation}\label{eq:1.13}
\frac{1}{\|\nabla u_{q_k}\|_{L^2(\mathbb{R}^2)}}u_{q_k}\Big(\frac{x}{\|\nabla u_{q_k}\|_{L^2(\mathbb{R}^2)}}+x_k\Big)\rightarrow \frac{\beta}{\|Q\|_{L^2(\mathbb{R}^2)}}Q(\beta x)
\end{equation}
 strongly in  $L^2(\mathbb{R}^2)$.
\end{Theorem}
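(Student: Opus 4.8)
The plan is to combine a sharp asymptotic for the mountain level $c_q:=E_{a,q}(u_q)$ with a blow‑up (rescaling) analysis; compared with the critical and subcritical treatments, the new features are that $q$ varies and that the rescaled equation carries the factor $a\varepsilon_q^{2-q}$.

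\medskip
\noindent\emph{Step 1 (energy estimate).} I would first show that $c_q=\frac{q-2}{2q}\tau_q^{2}(1+o(1))$ as $q\to2_+$, so in particular $c_q\to+\infty$. For the lower bound, the sharp Gagliardo--Nirenberg inequality $\int_{\mathbb{R}^2}|u|^{q+2}\le\frac{q+2}{2a_q^*}\|\nabla u\|_2^{q}\|u\|_2^{2}$ (whose extremal is $\varphi_q$, and $a_q^*\to a^*$) gives $E_{a,q}(u)\ge g_q(\|\nabla u\|_2)$ with $g_q(s):=\frac12 s^2-\frac{a}{2a_q^*}s^q$, and $\max_{s>0}g_q(s)=g_q(\tau_q)=\frac{q-2}{2q}\tau_q^2$; since for $q$ near $2$ the two ends of the mountain‑pass family of \cite{JE} lie on opposite sides of $\{\|\nabla u\|_2=\tau_q\}$, every admissible path crosses this sphere and hence $c_q\ge\frac{q-2}{2q}\tau_q^2$. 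For the matching upper bound, I would evaluate $E_{a,q}$ along the path $t\mapsto t\|\varphi_q\|_2^{-1}\varphi_q(t(\cdot-x_0))\in S(1)$, with $x_0$ a fixed point on the ellipse $\{|x|_b=A\}$; using $\|\nabla\varphi_q\|_2^2=\|\varphi_q\|_2^2$ and $\|\varphi_q\|_{q+2}^{q+2}=\frac{q+2}{2}\|\varphi_q\|_2^2$ (which follow from \eqref{eq:1.9} and the Pohozaev identity), the energy along this path equals $g_q(t)+\frac12\int_{\mathbb{R}^2}V\phi_{q,t}^2$, and since $V(x_0)=|\nabla V(x_0)|=0$ the last term is bounded uniformly in $t$ and $q$; hence the maximum is $\le\frac{q-2}{2q}\tau_q^2+O(1)=\frac{q-2}{2q}\tau_q^2(1+o(1))$.

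\medskip
\noindent\emph{Step 2 (blow‑up rate and rescaling).} Since $u_q$ is a critical point of $E_{a,q}$ on $S(1)$, differentiating $t\mapsto E_{a,q}\big(t\,u_q(t\cdot)\big)$ at $t=1$ (a curve in $S(1)$) yields the Pohozaev identity $\|\nabla u_q\|_2^2=\frac12\int_{\mathbb{R}^2}(x\cdot\nabla V)u_q^2+\frac{aq}{q+2}\int_{\mathbb{R}^2}|u_q|^{q+2}$. For $V=(|x|_b-A)^2$ one computes $x\cdot\nabla V=2V+2A(|x|_b-A)$, so $\frac12\int(x\cdot\nabla V)u_q^2\ge\int Vu_q^2-A\big(\int Vu_q^2\big)^{1/2}\ge\frac12\int Vu_q^2-C$; together with $2c_q\le\|\nabla u_q\|_2^2+\int Vu_q^2$ (from $E_{a,q}(u_q)=c_q$) this gives $\|\nabla u_q\|_2^2\ge\frac23 c_q-C'\to\infty$. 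Reinserting the Pohozaev identity into the energy also gives $c_q=\frac{q-2}{2q}\|\nabla u_q\|_2^2+\frac{q+2}{2q}\int Vu_q^2+O\big((\int Vu_q^2)^{1/2}\big)$, whence by Step 1 $\|\nabla u_q\|_2\le\tau_q(1+o(1))$; thus $\|\nabla u_q\|_2^{q-2}$ is bounded from above and away from $0$, and writing $\varepsilon_q:=\|\nabla u_q\|_2^{-1}\to0$ we have, along a subsequence, $a\varepsilon_q^{2-q}=a\|\nabla u_q\|_2^{q-2}\to\nu\in(0,a^*]$. The function $v_q(x):=\varepsilon_q u_q(\varepsilon_q x)$ satisfies $\|v_q\|_2=\|\nabla v_q\|_2=1$ and
\[
-\Delta v_q+\varepsilon_q^2 V(\varepsilon_q x)v_q=\varepsilon_q^2\mu_q\,v_q+a\varepsilon_q^{2-q}|v_q|^q v_q\quad\text{in }\mathbb{R}^2 .
\]

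\medskip
\noindent\emph{Step 3 (compactness and the limit profile).} By Lions' lemma, either $v_q$ vanishes, or there exist $x_q\in\mathbb{R}^2$ and $\delta_0>0$ with $\int_{B_1(x_q)}|v_q|^2\ge\delta_0$. The decisive point is to exclude vanishing: if $\sup_y\int_{B_1(y)}|v_q|^2\to0$, then (interpolating $L^{q+2}$ between $L^2$ and a fixed $L^5$, using $q+2\to4$) $\|v_q\|_{q+2}\to0$, i.e. $\|u_q\|_{q+2}^{q+2}=o(\|\nabla u_q\|_2^{q})$; feeding this into the Nehari, Pohozaev and energy identities together with the \emph{sharp} asymptotic $c_q=\frac{q-2}{2q}\tau_q^2(1+o(1))$ of Step 1 leads to a contradiction. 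Hence $v_q$ does not vanish, and the bound $\int Vu_q^2=O(c_q)$ of Step 2 forces $\varepsilon_q x_q$ to stay in a bounded neighbourhood of $\{|x|_b=A\}$, so that $\tilde v_q:=v_q(\cdot+x_q)\rightharpoonup v\ge0$, $v\not\equiv0$, in $H^1(\mathbb{R}^2)$. Testing the $\tilde v_q$–equation against $\psi\in C_c^\infty(\mathbb{R}^2)$: the potential term disappears because $\varepsilon_q^2 V(\varepsilon_q(\cdot+x_q))\to0$ locally uniformly, and $\varepsilon_q^2\mu_q\int\tilde v_q\psi$ converges for every such $\psi$, hence $\varepsilon_q^2\mu_q\to\lambda$ for some $\lambda$; in the limit $v$ solves $-\Delta v+\lambda v=\nu|v|^2 v$, and the Pohozaev identity of this equation forces $\lambda>0$. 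By uniqueness of the positive solution of \eqref{eq:1.7} (\cite{K}), $v$ is, up to a translation that can be absorbed into $x_q$, of the form $\alpha Q(\beta\cdot)$ with $\beta^2=\lambda$ and $\nu\alpha^2=\beta^2$; then $\|v\|_2^2=\|Q\|_2^2/\nu\ge1$, while weak $L^2$ lower semicontinuity gives $\|v\|_2\le\liminf\|\tilde v_q\|_2=1$, so $\|v\|_2=1$, $\nu=\|Q\|_2^2$ and $\alpha=\beta/\|Q\|_2$, i.e. $v=\frac{\beta}{\|Q\|_2}Q(\beta\cdot)$ with $\beta>0$. Finally $\|\tilde v_q\|_2=1=\|v\|_2$ together with $\tilde v_q\rightharpoonup v$ in $L^2$ upgrades the convergence to strong convergence in $L^2(\mathbb{R}^2)$; since $\tilde v_{q_k}(x)=\frac{1}{\|\nabla u_{q_k}\|_2}u_{q_k}\big(\frac{x}{\|\nabla u_{q_k}\|_2}+\varepsilon_{q_k}x_{q_k}\big)$, the choice $x_k:=\varepsilon_{q_k}x_{q_k}$ gives \eqref{eq:1.13}.

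\medskip
\noindent The main obstacle is the exclusion of vanishing in Step 3 (and the accompanying control of the translations $\varepsilon_q x_q$): the global integral identities alone are consistent with a ``potential–dominated'' blow‑up, so the argument must genuinely use the \emph{sharp} constant in the Gagliardo--Nirenberg inequality (through the Step 1 asymptotic of $c_q$) and the sign structure $V\ge0$, $\{V=0\}=\{|x|_b=A\}$. Steps 1--2 are, by comparison, routine.
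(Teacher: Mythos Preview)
Your overall strategy matches the paper's, but Step 3 contains a genuine gap in the exclusion of vanishing, and a secondary oversight in the treatment of the potential term.

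\textbf{The vanishing gap.} Suppose vanishing occurs, so $\int|u_q|^{q+2}=o(\|\nabla u_q\|_2^{2})$ (using that $\|\nabla u_q\|_2^{q-2}$ is bounded). Then the Pohozaev identity gives $\|\nabla u_q\|_2^{2}(1+o(1))=\int Vu_q^{2}+A\int(|x|_b-A)u_q^{2}$, hence $\int Vu_q^{2}=\|\nabla u_q\|_2^{2}(1+o(1))$, and the energy identity gives $c_q=\|\nabla u_q\|_2^{2}(1+o(1))$. Combined with your sharp asymptotic $c_q=\frac{q-2}{2q}\tau_q^{2}(1+o(1))$ this only yields $\|\nabla u_q\|_2^{2}\sim\frac{q-2}{2q}\tau_q^{2}$, which is \emph{consistent} with all the bounds in Steps~1--2. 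The ``Nehari'' identity is just $\mu_q=\|\nabla u_q\|_2^{2}+\int Vu_q^{2}-a\int|u_q|^{q+2}$; under vanishing it gives $\mu_q\sim 2\|\nabla u_q\|_2^{2}\to+\infty$, but without an independent upper bound on $\mu_q$ this is no contradiction. The three identities plus the energy asymptotic are not enough.

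What the paper supplies is the bound $\mu_q<\lambda_1$: test the Euler--Lagrange equation against the positive principal eigenfunction $\varphi_1$ of $-\Delta+V$ (compact resolvent since $V\to\infty$). Since $u_q>0$ and $\varphi_1>0$, $(\lambda_1-\mu_q)\int u_q\varphi_1=a\int u_q^{q+1}\varphi_1>0$, so $\mu_q<\lambda_1$. Then the Nehari identity gives $a\int|u_q|^{q+2}\ge\|\nabla u_q\|_2^{2}-\lambda_1\ge\tfrac12\|\nabla u_q\|_2^{2}$ for $q$ near $2$, and after rescaling $\int|v_q|^{q+2}\ge c>0$, which excludes vanishing immediately.

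\textbf{The potential term.} Your claim that $\int Vu_q^{2}=O(c_q)$ forces $\varepsilon_q x_q$ to stay in a bounded neighbourhood of the ellipse is not justified: since $c_q\to\infty$, this only gives $\big||\varepsilon_q x_q|_b-A\big|=O(c_q^{1/2})\to\infty$, so $\varepsilon_q^{2}V(\varepsilon_q(\cdot+x_q))$ need not tend to $0$ locally. The paper does not exclude this; it distinguishes the cases $|\varepsilon_k y_k|_b$ bounded versus $|\varepsilon_k y_k|_b\to\infty$, shows in the latter that $\varepsilon_k^{2}|\varepsilon_k y_k|_b^{2}\to\mu\ge0$ (using the non--vanishing and $\int Vu_q^{2}\le C(q-2)\tau_q^{2}$), and obtains the limit equation $-\Delta w+(\mu+\beta^{2})w=a^{*}w^{3}$. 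The uniqueness argument then still yields $w=\tfrac{\beta'}{\|Q\|_2}Q(\beta'\cdot)$ with $\|w\|_2=1$, so \eqref{eq:1.13} holds in all cases. You should either prove boundedness of $\varepsilon_q x_q$ (which the paper does \emph{not} do at this stage) or carry the extra constant through the limit.
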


\bigskip

The proof of Theorem \ref{thm3} is delicate. We commence with the estimate $E_{a,q}(u_q)$ for the mountain pass solution $u_q$. It is shown in section 4 that
\begin{equation*}
\frac{q-2}{2q}\tau_q^2\leq E_{a,q}(u_q)\leq \frac{q-2}{2q}\tau_q^2+\tau_q^{-2}\Big(\frac{1}{2}\int_{\mathbb{R}^2}|x|^2\frac{|Q(x)|^2}{\|Q\|_2^2}\,dx+o(1)\Big)\ \ {\rm as} \ \ q\to 2_+.
\end{equation*}
In contrast with subcritical or critical cases, such an estimate can not established for the supercritical case by simply choosing a suitable trail function. Instead, we need to construct
a suitable path, and estimate energy of $E_{a,q}(u)$ on it. Then the proof of Theorem \ref{thm3} is completed by the blow up analysis. This is different and more difficult than subcritical and critical cases because there is essentially no compactness for the sequence $\{u_q\}$.

\bigskip

Finally, we study the asymptotic behavior of $x_k$.
\begin{Theorem}\label{thm4}
Let $\{x_k\}$ be in Theorem \ref{thm3}. Then either $\liminf_{k\to \infty}|x_k|\rightarrow +\infty$; or
 there exists a subsequence of $\{x_k\}$, still denoted by $\{x_k\}$ such that
 \[
 x_k\rightarrow x_0\in \mathcal{Z}:=\{(b_1A,0), (-b_1A,0)\}
 \]
with
\[
\tau_{q_k}(|x_k|_b-A)\rightarrow 0.
\]
Moreover, in this case equation \eqref{eq:1.13} holds with $\beta=1$,  and $\tau_{q_k}^{-2}\|\nabla u_{q_k}\|_{L^2(\mathbb{R}^2)}^2\rightarrow 1$ as $k\rightarrow \infty$.
\end{Theorem}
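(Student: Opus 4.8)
The starting point is the two-sided energy estimate quoted just above Theorem~\ref{thm4},
\[
\frac{q-2}{2q}\tau_q^2\leq E_{a,q}(u_q)\leq \frac{q-2}{2q}\tau_q^2+\tau_q^{-2}\Big(\frac{1}{2}\int_{\mathbb{R}^2}|x|^2\frac{|Q(x)|^2}{\|Q\|_2^2}\,dx+o(1)\Big),
\]
together with the blow-up decomposition \eqref{eq:1.13} established in Theorem~\ref{thm3}. First I would set $\varepsilon_{q_k}:=\|\nabla u_{q_k}\|_{L^2}^{-1}$ and $w_k(x):=\varepsilon_{q_k}u_{q_k}(\varepsilon_{q_k}x+x_k)$, so that $\|w_k\|_{L^2}=1$, $\|\nabla w_k\|_{L^2}=1$, and $w_k\to \frac{\beta}{\|Q\|_2}Q(\beta\,\cdot)$ in $L^2$; a standard interpolation/compactness upgrade (as in the critical case of \cite{GZZ1,GZ}) gives convergence in $H^1$ after passing to a further subsequence. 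Rescaling the energy identity, the gradient term contributes $\tfrac12\varepsilon_{q_k}^{-2}$, the potential term contributes $\tfrac12\int V(\varepsilon_{q_k}x+x_k)|w_k|^2\,dx$, and the nonlinear term contributes $-\tfrac{a}{q+2}\varepsilon_{q_k}^{-2}\int\varepsilon_{q_k}^{2-q}|w_k|^{q+2}$. Matching these against the energy bounds will force $\varepsilon_{q_k}^{-2}$ to grow at the rate $\tau_{q_k}^2$ and, more precisely, pin down the constant: I expect $\tau_{q_k}^{-2}\varepsilon_{q_k}^{-2}=\tau_{q_k}^{-2}\|\nabla u_{q_k}\|_{L^2}^2\to 1$, which is exactly the last assertion, and simultaneously that the limiting profile must be the optimizer of the Gagliardo--Nirenberg quotient, i.e. $\beta=1$.

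\textbf{Dichotomy on $x_k$.} The core of the argument is to analyze $\int V(\varepsilon_{q_k}x+x_k)|w_k(x)|^2\,dx$. Since $w_k$ concentrates near $0$ with scale $\varepsilon_{q_k}\to 0$ and $Q$ decays exponentially, this integral behaves like $V(x_k)+O(\varepsilon_{q_k}\,|\nabla V(x_k)|)+\dots$, so the potential energy of $u_{q_k}$ is essentially $\tfrac12 V(x_k)$ to leading order plus a curvature correction at scale $\varepsilon_{q_k}^2$. If $|x_k|$ does not diverge, then along a subsequence $x_k\to x_0$ for some $x_0\in\mathbb{R}^2$. The upper energy bound, which is of order $\tau_{q_k}^2$ for the gradient part and only $O(\tau_{q_k}^{-2})$ beyond that, forces $V(x_k)\to 0$; since the zero set of $V$ is the ellipse $\{|x|_b=A\}$, this gives $|x_k|_b\to A$. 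Then I would use the finer structure: on the ellipse, the relevant ``effective potential'' seen after blow-up involves how fast $V$ grows transversally, and the minimization over the ellipse of the next-order term selects the two points of the ellipse farthest out along the major axis — namely $\mathcal{Z}=\{(b_1A,0),(-b_1A,0)\}$, where the ellipse is ``flattest'' so the quadratic penalty $(|x|_b-A)^2$ costs least for a concentrated bump of fixed physical width. (This is the same mechanism as in \cite{GZ}; the ellipse-shaped potential breaks the rotational symmetry of the ring case \cite{GZZ1} down to these two points.) The refined upper bound on $E_{a,q}(u_{q_k})$ — the one obtained by testing on the carefully constructed path — is what makes $x_0$ land in $\mathcal Z$ rather than merely on the ellipse, and it also yields $\tau_{q_k}(|x_k|_b-A)\to 0$ by comparing the transversal potential cost $\tfrac12(|x_k|_b-A)^2$ (times a curvature factor) against the available budget $O(\tau_{q_k}^{-2})$.

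\textbf{Pinning $\beta=1$ and the gradient norm.} Once $x_k\to x_0\in\mathcal Z$ and $\tau_{q_k}(|x_k|_b-A)\to 0$, the potential contribution to the rescaled energy is $o(\varepsilon_{q_k}^{-2})=o(\tau_{q_k}^2)$, so the leading balance is purely between $\tfrac12\varepsilon_{q_k}^{-2}$ and the nonlinear term. Writing $s_k:=\varepsilon_{q_k}^{\,2-q_k}$, the nonlinear term is $\tfrac{a}{q_k+2}s_k\varepsilon_{q_k}^{-2}\int|w_k|^{q_k+2}$, and plugging the limit profile and using $\int|Q|^4=2\|Q\|_2^2$ (the standard identity for \eqref{eq:1.7}) together with the Pohozaev/Nehari identities for $Q$, the two energy bounds squeeze $s_k\to (\tfrac{q_k a}{2a_{q_k}^*})^{-1}$ up to $o(1)$, which is exactly $\tau_{q_k}^{2-q_k}$ by \eqref{eq:1.11}, hence $\varepsilon_{q_k}/\tau_{q_k}^{-1}\to 1$, i.e. $\tau_{q_k}^{-2}\|\nabla u_{q_k}\|_{L^2}^2\to1$. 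Feeding this back into \eqref{eq:1.13} and comparing the normalization constants on both sides forces $\beta=1$.

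\textbf{Main obstacle.} The delicate point is not the leading-order balance but the \emph{second-order} analysis that distinguishes the two vertices $(\pm b_1A,0)$ from the rest of the ellipse and yields the sharp rate $\tau_{q_k}(|x_k|_b-A)\to0$: one must expand $\int V(\varepsilon_{q_k}x+x_k)|w_k|^2$ to the order $\varepsilon_{q_k}^2\sim\tau_{q_k}^{-2}$ \emph{uniformly} in $q_k$, control the error between $w_k$ and its limit $Q$ in a weighted ($|x|^2$-moment) norm — which is where the $H^1$ (indeed weighted-$H^1$) upgrade of \eqref{eq:1.13} is essential and where the lack of compactness in the original $\{u_q\}$ sequence bites — and then match against the refined upper bound coming from the test-path construction of section~4. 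The alternative branch $\liminf|x_k|=+\infty$ is the ``escape to infinity'' scenario that the argument cannot exclude in general (the potential is unbounded there, but a bump can in principle sit far out along a direction where $V$ grows slowly relative to the energy budget), so it is carried along as the first horn of the dichotomy rather than ruled out.
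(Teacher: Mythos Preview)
Your overall strategy is in the right spirit, but there are two genuine gaps that keep the argument from going through as written.

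\textbf{The gradient-norm limit cannot be extracted from ``squeezing $s_k$.''} You propose to obtain $\tau_{q_k}^{-2}\|\nabla u_{q_k}\|_2^2\to 1$ by matching the nonlinear term and concluding that $s_k:=\varepsilon_{q_k}^{2-q_k}$ tends to $\tau_{q_k}^{2-q_k}$. But the exponent $2-q_k\to 0$, so $(\varepsilon_{q_k}\tau_{q_k})^{q_k-2}\to 1$ gives essentially no information on $\varepsilon_{q_k}\tau_{q_k}$; indeed from Proposition~\ref{tg1} one only knows a priori that $C_1(q_k-2)\le (\varepsilon_{q_k}\tau_{q_k})^{-2}\le 1+C_2(q_k-2)^{-1}\tau_{q_k}^{-2}$, and both bounds are compatible with $(\varepsilon_{q_k}\tau_{q_k})^{q_k-2}\to 1$. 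The paper obtains $\tau_{q_k}^{-2}\|\nabla u_{q_k}\|_2^2\to 1$ by a completely different route: once $\{x_k\}$ is bounded, De Giorgi--Nash--Moser plus the comparison principle give a \emph{uniform} exponential decay $w_k(x)\le Ce^{-c|x|}$, hence $\int(|x|_b-A)^2|u_{q_k}|^2\le C$; feeding this into the Pohozaev-type identity $qE_{a,q}(u_q)-Q_q(u_q)=\frac{q-2}{2}\|\nabla u_q\|_2^2+\frac12\int[q(|x|_b-A)^2+2|x|_b(|x|_b-A)]|u_q|^2$ (with $Q_q(u_q)=0$) and dividing by $(q_k-2)\tau_{q_k}^2$ yields the limit directly. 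You never invoke $Q_q(u_q)=0$, and your claimed ``$H^1$ upgrade by standard interpolation/compactness'' is not available here: after rescaling the trapping potential disappears, so there is no compactness to appeal to.

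\textbf{Isolating the potential energy requires more than a Taylor expansion.} To conclude $V(x_k)\to 0$ and then pin $x_0\in\mathcal Z$ with the rate $\tau_{q_k}(|x_k|_b-A)\to 0$, you need to separate from $E_{a,q_k}(u_{q_k})$ a piece that is \emph{at least} $\frac{q_k-2}{2q_k}\tau_{q_k}^2$ plus a nonnegative potential contribution. A naive expansion $\int V(\varepsilon_k x+x_k)|w_k|^2=V(x_k)+O(\varepsilon_k)$ does not do this: the gradient and nonlinear parts of the rescaled energy do not a priori combine to $\frac{q_k-2}{2q_k}\tau_{q_k}^2$ (they would only if $u_{q_k}$ were an optimizer of the Gagliardo--Nirenberg inequality, which it is not). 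The paper's device is to dilate $u_{q_k}$ to $u_{q_k}^{t_k}$ with $t_k$ chosen so that $\tilde Q_{q_k}(u_{q_k}^{t_k})=0$; since $t\mapsto E_{a,q_k}(u_{q_k}^t)$ has its local maximum at $t=1$ (shown via an ODE analysis of $g(t)$) and $t_k\to 1$, one gets
\[
E_{a,q_k}(u_{q_k})\ \ge\ E_{a,q_k}(u_{q_k}^{t_k})\ \ge\ \tilde c_{q_k}+\int V(x)|u_{q_k}^{t_k}|^2\,dx\ =\ \frac{q_k-2}{2q_k}\tau_{q_k}^2+\int V(x)|u_{q_k}^{t_k}|^2\,dx,
\]
using Remark~\ref{r1}. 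Comparing with Proposition~\ref{ee} then bounds $\tau_{q_k}^2\int V|u_{q_k}^{t_k}|^2$ from above, and a Fatou-type lower bound on the same quantity (computed in rescaled variables, using the already-established $\varepsilon_k\tau_{q_k}\to 1$ and uniform decay of $w_k$) forces both $x_0^2=0$ and $C_0=0$. Your plan reverses the order---locate $x_0$ first, then deduce the gradient limit---which is circular: the second-order comparison you need at scale $\varepsilon_k^2$ against the budget $\tau_{q_k}^{-2}$ presupposes $\varepsilon_k\tau_{q_k}\to 1$.
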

\bigskip

This paper is organized as follows. In section 2, we collect and prove some relevant results for future reference. Then, in section 3, we establish the existence of critical points of $E_{a,q}$. Finally, we analyze the limiting behavior of these critical points in section 4.

\bigskip

\section{Preliminaries}

\bigskip

In this section, we collect and prove some relevant results for future reference. The following result is used frequently in literatures, we state it explicitly.


\bigskip

\begin{Lemma}\label{lem:5.1} Let $\varphi_q\geq 0$ be the unique radially symmetric positive solution of \eqref{eq:1.9} with $2<q\leq 3$. Then, $\varphi_q\rightarrow Q$ strongly in $H^1(\mathbb{R}^2)$ as $q\to 2_+$, and there exist positive constants $C$ and $\delta$ independent of $q$ such that
\begin{equation}\label{eq:5.1}
\varphi_q(x)\leq Ce^{-\delta |x|}\ \ {\rm for} \ \ x\in\mathbb{R}^2.
\end{equation}
In particular, $a_q^*=\|\varphi\|_2^q\to a^* = \|Q\|_2^2$ as $q\to2_+$.
\end{Lemma}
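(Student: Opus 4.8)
The plan is to establish the convergence $\varphi_q \to Q$ in $H^1(\mathbb{R}^2)$ by first obtaining uniform-in-$q$ estimates on the family $\{\varphi_q\}_{2<q\le 3}$, then extracting a limit and identifying it with $Q$ via the uniqueness statement for \eqref{eq:1.7}. First I would recall the Pohozaev/Nehari-type identities associated with \eqref{eq:1.9}: multiplying the equation by $\varphi_q$ and integrating gives $\|\nabla\varphi_q\|_2^2 + \frac{2}{q}\|\varphi_q\|_2^2 = \frac{2}{q}\|\varphi_q\|_{q+2}^{q+2}$, while the Pohozaev identity in $\mathbb{R}^2$ gives a second relation; together with the sharp Gagliardo--Nirenberg inequality for the exponent $q+2$ these pin down $\|\nabla\varphi_q\|_2$, $\|\varphi_q\|_2$ and $\|\varphi_q\|_{q+2}$ explicitly in terms of $q$ and the best constant. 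Since the Gagliardo--Nirenberg constant depends continuously on $q$ and the exponent $q+2$ stays in the compact range $[4,5]$, these quantities are bounded above and below uniformly in $q\in(2,3]$; in particular $\{\varphi_q\}$ is bounded in $H^1(\mathbb{R}^2)$.

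Next I would upgrade boundedness to convergence. Up to translations one may assume each $\varphi_q$ is radially symmetric (it already is, by hypothesis), so by Strauss's radial compactness lemma a subsequence converges weakly in $H^1$, strongly in $L^p$ for $2<p<\infty$, and pointwise a.e.\ to some radial $\varphi_*\ge 0$. Passing to the limit in the weak formulation of \eqref{eq:1.9}, and using $\frac{2}{q}\to 1$, shows $\varphi_*$ solves $-\Delta\varphi_* + \varphi_* = \varphi_*^3$. The lower bound on $\|\varphi_q\|_2$ from the previous step prevents $\varphi_*\equiv 0$, so $\varphi_*$ is a nontrivial nonnegative radial solution, hence by the uniqueness result of \cite{K} (and positivity/decay from \cite{BL}) $\varphi_* = Q$. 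Strong $H^1$ convergence then follows because the explicit formulas for $\|\nabla\varphi_q\|_2^2$ and $\|\varphi_q\|_2^2$ converge to the corresponding quantities for $Q$, so the norms converge along the full sequence, and weak convergence plus norm convergence gives strong convergence; since the limit $Q$ is the same for every subsequence, the whole family converges as $q\to 2_+$. The final assertion $a_q^* = \|\varphi_q\|_2^q \to \|Q\|_2^2 = a^*$ is then immediate from $\|\varphi_q\|_2\to\|Q\|_2$ and $q\to 2$.

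For the uniform exponential decay \eqref{eq:5.1}, the main point is a uniform $L^\infty$ bound: from the uniform $H^1$ bound and elliptic $L^p$ estimates applied to $-\Delta\varphi_q = \frac{2}{q}(\varphi_q^{q+1}-\varphi_q)$, bootstrapping gives $\|\varphi_q\|_{L^\infty}\le C$ independent of $q\in(2,3]$. Then $\varphi_q\to 0$ uniformly at infinity, again uniformly in $q$ by the radial Strauss estimate $\varphi_q(x)\le C\|\varphi_q\|_{H^1}|x|^{-1/2}$; hence there is $R_0$, independent of $q$, with $\frac{2}{q}(\varphi_q(x)^{q} - 1) \le -\frac12$ for $|x|\ge R_0$, so $-\Delta\varphi_q + \frac14\varphi_q \le 0$ there. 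A comparison with the barrier $Ce^{-\delta|x|}$ (with $\delta$ small and fixed, $C$ chosen from the uniform sup bound on $|x|=R_0$) yields \eqref{eq:5.1}. The main obstacle is making every constant genuinely independent of $q$: this rests on the fact that the exponent $q+2$ ranges over the compact interval $[4,5]$, so all Sobolev and Gagliardo--Nirenberg constants, and all elliptic-regularity constants, can be taken uniform — once that is granted the rest is routine.
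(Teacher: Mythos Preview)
Your proof sketch is correct and follows the standard route for this kind of result. The paper, however, does \emph{not} actually prove Lemma~\ref{lem:5.1}: it introduces Section~2 by saying ``we collect and prove some relevant results'' and then states the lemma ``explicitly'' as something ``used frequently in literatures,'' with no argument given. So there is no proof in the paper to compare against.

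That said, your outline matches precisely the ingredients the paper relies on elsewhere: the Pohozaev/Nehari identity you invoke is exactly~\eqref{ee-2} (quoted from Cazenave, Lemma~8.12), the sharp Gagliardo--Nirenberg inequality with constant $a_q^*$ is used throughout (e.g.\ in the proof of Theorem~\ref{thm1}), and the uniqueness result for $Q$ is the cited theorem of Kwong. One small remark: your claim that the sharp Gagliardo--Nirenberg constant depends continuously on $q$ is essentially equivalent to the convergence $a_q^*\to a^*$ you are trying to prove, so it is cleaner to argue as you implicitly do afterwards --- get uniform upper and lower bounds on $\|\varphi_q\|_2$ by testing the Weinstein quotient with a fixed function (upper bound on $a_q^*$) and using a non-sharp Gagliardo--Nirenberg inequality with constant uniform on $q\in(2,3]$ (lower bound on $a_q^*$) --- and then deduce continuity of the constant as a consequence rather than an input.
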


\bigskip

 Let $\tilde{S}(1)=\{u\in H^1(\mathbb{R}^2):\|u\|_2^2=1\}$. Denote by $E_{a,q}|_{V=0}(u)$ the energy functional without the trapping potential
\begin{equation}\label{eq:3.5}
E_{a,q}|_{V=0}(u)=\frac{1}{2}\int_{\mathbb{R}^2}|\nabla u|^2\,dx-\frac{a}{q+2}\int_{\mathbb{R}^2}|u|^{q+2}\,dx,\ \ u\in H^1(\mathbb{R}^2).
\end{equation}
We define
\begin{equation}\label{eq:3.6}
\tilde{c}_q=\inf_{g\in \Gamma}\max_{u\in g}E_{a,q}|_{V=0}(u),
\end{equation}
where
\[
\tilde{\Gamma}=\{g\in C([0,1],\tilde{S}(1))|g(0)=u_1, \ g(1)=u_2\}
\]
 for $u_1, u_2\in \tilde{S}(1)$ such that  $\tilde{c}_q>\max\{E_{a,q}|_{V=0}(g(0)),E_{a,q}|_{V=0}(g(1))\}$.
We know from Theorem 2.1 in \cite{JE} that for every $a>0$, there exists a unique positive critical point $\tilde{\varphi}_q$ of $E_{a,q}|_{V=0}(u)$ constrained on $S(1)$ at the energy level $\tilde{c}_q$. Now, we give an explicit description of $\tilde{c}_q$ and $\tilde{\varphi}_q$ in terms of $a, a^*_q$ and $\varphi_q$.
\begin{Lemma}\label{lem:3.1}
Let $q>2$. Then
\begin{equation}\label{eq:3.7}
\tilde{c}_q=\frac{q-2}{2q}\tau_q^2.
\end{equation}
and
\[
\tilde{\varphi}_q(x)=\frac{{\tau_q}}{\|\varphi_q\|_2}\varphi_q({\tau_q}x).
\]
\end{Lemma}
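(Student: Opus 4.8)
The plan is to compute $\tilde c_q$ and $\tilde\varphi_q$ by exploiting the scaling structure of $E_{a,q}|_{V=0}$ on $\tilde S(1)$. First I would record that, by Theorem 2.1 of \cite{JE}, $\tilde\varphi_q$ is the unique positive constrained critical point of $E_{a,q}|_{V=0}$ at the mountain level, so it suffices to exhibit \emph{one} positive function of the claimed form that is a constrained critical point and then identify its energy. A positive constrained critical point $u$ satisfies $-\Delta u=\lambda u+a|u|^qu$ for some Lagrange multiplier $\lambda$. Starting from $\varphi_q$, which solves $-\Delta\varphi_q+\tfrac2q\varphi_q=\tfrac2q\varphi_q^{q+1}$, I set $w(x)=\sigma\varphi_q(\tau x)$ and choose $\sigma,\tau>0$ so that (i) $\|w\|_2^2=1$ and (ii) the rescaled equation becomes $-\Delta w=\lambda w+a|w|^qw$. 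Condition (ii) forces $\tau^2$ and $a\sigma^q$ to be tied to $\tfrac2q$: plugging in gives $\tau^2\cdot\tfrac2q=$ the coefficient of the linear term and $a\sigma^q\tau^2=\tau^{2}\sigma^{q}\cdot a$ must match $\tfrac2q\sigma^{q}\tau^{q}\cdot(\text{power bookkeeping})$; carrying this out yields precisely $\tau=\tau_q=(2a_q^*/(qa))^{1/(q-2)}$ and $\sigma=\tau_q/\|\varphi_q\|_2$, after using $\|w\|_2^2=\sigma^2\tau^{-2}\|\varphi_q\|_2^2=1$. This pins down $\tilde\varphi_q(x)=\dfrac{\tau_q}{\|\varphi_q\|_2}\varphi_q(\tau_q x)$.

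Next I would compute $\tilde c_q=E_{a,q}|_{V=0}(\tilde\varphi_q)$ directly. Using the change of variables $y=\tau_q x$,
\[
\|\nabla\tilde\varphi_q\|_2^2=\sigma^2\|\nabla\varphi_q\|_2^2,\qquad
\|\tilde\varphi_q\|_{q+2}^{q+2}=\sigma^{q+2}\tau_q^{-2}\|\varphi_q\|_{q+2}^{q+2},
\]
so that $E_{a,q}|_{V=0}(\tilde\varphi_q)=\tfrac12\sigma^2\|\nabla\varphi_q\|_2^2-\tfrac{a}{q+2}\sigma^{q+2}\tau_q^{-2}\|\varphi_q\|_{q+2}^{q+2}$. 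I then use the Pohozaev and Nehari identities for $\varphi_q$ coming from \eqref{eq:1.9} — multiplying \eqref{eq:1.9} by $\varphi_q$ and by $x\cdot\nabla\varphi_q$ and integrating — to express $\|\nabla\varphi_q\|_2^2$ and $\|\varphi_q\|_{q+2}^{q+2}$ in terms of $\|\varphi_q\|_2^2$; in two dimensions these give $\|\nabla\varphi_q\|_2^2=\tfrac{q}{q+2}\cdot\tfrac2q\|\varphi_q\|_{q+2}^{q+2}$-type relations and $\|\varphi_q\|_{q+2}^{q+2}=\tfrac{q+2}{q}\|\varphi_q\|_2^2$ (exact constants to be checked against the normalization in \eqref{eq:1.9}). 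Substituting and using $a\sigma^q=a\tau_q^q\|\varphi_q\|_2^{-q}=\tfrac2q\tau_q^{q-2}\cdot\tau_q^{2}\|\varphi_q\|_2^{-q}\cdot(\dots)$ together with the defining relation $a\tau_q^{q-2}=2a_q^*/q=\tfrac2q\|\varphi_q\|_2^q$ collapses the two terms into a single multiple of $\tau_q^2$, and bookkeeping the rational coefficients produces $\tilde c_q=\dfrac{q-2}{2q}\tau_q^2$.

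The main obstacle is purely computational rather than conceptual: getting the exponents and the rational prefactors exactly right. There are two places where sign/coefficient errors are easy — the precise form of the Pohozaev identity in dimension $2$ for the $q$-dependent normalization in \eqref{eq:1.9} (the linear coefficient $\tfrac2q$ rather than $1$ changes the Nehari relation), and the algebra that lets $a$, $\|\varphi_q\|_2$ and $\tau_q$ cancel via \eqref{eq:1.11} and \eqref{eq:1.6a}. I would double-check the final answer by a consistency test: as $q\to2_+$ one has $\tau_q\to(a^*/a)^{1/(q-2)}$, which blows up, and $\tfrac{q-2}{2q}\tau_q^2$ should match the known asymptotics of the constrained mountain level; and the formula for $\tilde\varphi_q$ should, after this rescaling, solve $-\Delta\tilde\varphi_q+\mu_q\tilde\varphi_q=a\tilde\varphi_q^{q+1}$ with $\mu_q=\tfrac2q\tau_q^2$, which one can verify in one line. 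Once the scaling exponents are fixed, uniqueness from \cite{JE} closes the argument with no further work.
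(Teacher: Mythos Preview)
Your proposal is correct and follows essentially the same route as the paper: both arguments identify $\tilde\varphi_q$ with a rescaling of $\varphi_q$ and then read off the energy using the Nehari/Pohozaev relations for \eqref{eq:1.9}. The only difference is the direction of the rescaling---the paper starts from the (abstractly existing) $\tilde\varphi_q$, rescales it to a positive solution of \eqref{eq:1.9}, and invokes Kwong-type uniqueness of $\varphi_q$, whereas you start from $\varphi_q$, rescale onto $\tilde S(1)$, and invoke the uniqueness of the positive constrained critical point from \cite{JE}; both uniqueness statements do the job, and the bookkeeping (your consistency check $\mu_q=\tfrac2q\tau_q^2$ included) matches the paper's computation.
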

\begin{proof}
It is know from \cite{JE} that
\[
\tilde{c}_q=E_{a,q}|_{V=0}(\tilde{\varphi}_q),\ \, \|\tilde{\varphi}_q\|_2^2=1
\]
and $\tilde{\varphi}_q$ satisfies
\begin{equation}\label{eq:3.8}
-\Delta \tilde{\varphi}_q + \mu_q\tilde{\varphi}_q = a\tilde{\varphi}_q^{q+1},
\end{equation}
where $\mu_q$ is the Lagrange multiplier. Let $w_q=t\tilde{\varphi}_q(sx)$, where $s$ and $t$ satisfies
\[
s^2\mu_q=at^{-q}s^2=\frac{2}{q}.
\]
Then, $w_q$ is a positive solution of \eqref{eq:1.9}. By the uniqueness of positive solutions of \eqref{eq:1.9}, we have $w_q=\varphi_q$. Since $\|\tilde{\varphi}_q\|_2^2=1$,
\begin{equation}\label{eq:3.9}
1 = \int_{\mathbb{R}^2}|\tilde \varphi_q|^2\,dx=t^{-2}s^2\int_{\mathbb{R}^2}|w_q|^2\,dx = t^{-2}s^2 (a_q^*)^{\frac{2}{q}}.
\end{equation}
Solving
\[
at^{-q}s^2=\frac{2}{q}\quad{\rm and }\quad t^{-2}s^2 (a_q^*)^{\frac 2q}=1,
\]
we obtain
\[
t=\Big(\frac{2a_q^*}{qa}\Big)^{\frac{1}{2-q}}(a_q^*)^{\frac{1}{q}},\ \  s=\Big(\frac{2a_q^*}{qa}\Big)^{\frac{1}{2-q}} = \tau_q^{-1}.
\]
Hence, we get
\begin{equation}\label{eq:3.10}
\tilde{\varphi}_q(x)=\frac{\tau_q}{\|\varphi_q\|_2}\varphi_q(\tau_q x).
\end{equation}
Substituting $\tilde{\varphi}_q$ into $E_{a,q}|_{V=0}(u)$,  we obtain
\[
\tilde{c}_q=\frac{q-2}{2q}\Big(\frac{2a_q^*}{qa}\Big)^{\frac{2}{q-2}}.
\]
\end{proof}

\bigskip

\begin{Remark}\label{r1}
It is known from Lemma 2.10 in \cite{JE} that $\tilde{c}_q$ can also be described by
\begin{equation}\label{eq:3.11}
\tilde{c}_q =\inf_{u\in \mathcal{Q}}E_{a,q}|_{V=0}(u),
\end{equation}
 where
\begin{equation*}
\mathcal{Q}=\{u\in H^1(\mathbb{R}^2)|\,\|u\|_2^2=1, \ \tilde{Q}_q(u)=0\}
\end{equation*}
and
\begin{equation}\label{eq:3.12}
\tilde{Q}_q(u)=\int_{\mathbb{R}^2}|\nabla u|^2\,dx-\frac{qa}{q+2}\int_{\mathbb{R}^2}|u|^{q+2}\,dx.
\end{equation}
\end{Remark}

\bigskip

\section{Existence}

\bigskip

In this section, we show the existence of  critical points of the functional $E_{a,q}(u)$ on the sphere
\begin{equation}\label{eq:2.1}
S(1)=\{u\in \mathcal{H}, \|u\|_2^2=1\},
\end{equation}
that is, we will prove Theorem \ref{thm1}. This will be done by a variant mountain pass theorem.
Before we proceed further, we recall the following compactness lemma, which can be proved as that in \cite{YY}.

\begin{Lemma}\label{lem:2.1} Suppose $V\in L_{loc}(\mathbb{R}^2)$ and  $\lim_{x\to\infty}V(x)=\infty$. Then the embedding $\mathcal{H}\hookrightarrow L^q(\mathbb{R}^2)$ is compact for $ q\in[2,\infty)$.
\end{Lemma}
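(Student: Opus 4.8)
The plan is to exploit the divergence $V(x)\to\infty$ to gain tightness of the $L^2$ mass and then combine this with the local Rellich--Kondrachov theorem and an interpolation inequality. Throughout I would regard $\mathcal{H}$ as a Hilbert space with norm $\|u\|_{\mathcal{H}}^2=\int_{\mathbb{R}^2}(|\nabla u|^2+(1+V(x))|u|^2)\,dx$; note that $\mathcal{H}\hookrightarrow H^1(\mathbb{R}^2)$ continuously, so by the two-dimensional Sobolev embedding $\mathcal{H}\hookrightarrow L^p(\mathbb{R}^2)$ continuously for every $p\in[2,\infty)$. So let $\{u_n\}\subset\mathcal{H}$ be bounded, say $\|u_n\|_{\mathcal{H}}\le M$, and pass to a subsequence with $u_n\rightharpoonup u$ weakly in $\mathcal{H}$, hence weakly in $H^1(\mathbb{R}^2)$ and a.e. It is enough to show $u_n\to u$ strongly in $L^q(\mathbb{R}^2)$ along this subsequence, since the weak limit is unique and a subsequence-of-a-subsequence argument then upgrades this to convergence of the full sequence.

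First I would establish tightness. Given $\varepsilon>0$, since $\lim_{x\to\infty}V(x)=\infty$ there is $R_\varepsilon>0$ with $V(x)\ge 1/\varepsilon$ for $|x|\ge R_\varepsilon$, whence
\[
\int_{|x|\ge R_\varepsilon}|u_n|^2\,dx\le\varepsilon\int_{|x|\ge R_\varepsilon}V(x)|u_n|^2\,dx\le\varepsilon M^2,
\]
and the same estimate holds for $u$ by weak lower semicontinuity of $\big(\int_{|x|\ge R_\varepsilon}V|\cdot|^2\big)^{1/2}$. I would then observe that on the ball $B_{R_\varepsilon}$ the sequence is bounded in $H^1(B_{R_\varepsilon})$ (this is where $V\in L_{\mathrm{loc}}$ is used, to make $\mathcal{H}$ well defined and Rellich applicable), so by Rellich--Kondrachov (in $\mathbb{R}^2$, $H^1(B_{R_\varepsilon})\hookrightarrow L^r(B_{R_\varepsilon})$ is compact for all $r\in[1,\infty)$) a further subsequence satisfies $u_n\to u$ in $L^2(B_{R_\varepsilon})$. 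Splitting $\mathbb{R}^2=B_{R_\varepsilon}\cup\{|x|\ge R_\varepsilon\}$ and using $|u_n-u|^2\le 2|u_n|^2+2|u|^2$ on the tail gives $\limsup_n\|u_n-u\|_{L^2(\mathbb{R}^2)}^2\le 4\varepsilon M^2$; letting $\varepsilon\to0$ yields $u_n\to u$ strongly in $L^2(\mathbb{R}^2)$.

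Finally, for $q\in(2,\infty)$ I would fix $p\in(q,\infty)$ and $\theta\in(0,1)$ with $\tfrac1q=\tfrac\theta2+\tfrac{1-\theta}p$; since $\{u_n-u\}$ is bounded in $L^p(\mathbb{R}^2)$ by the continuous embedding $\mathcal{H}\hookrightarrow L^p(\mathbb{R}^2)$, the interpolation inequality gives
\[
\|u_n-u\|_{L^q(\mathbb{R}^2)}\le\|u_n-u\|_{L^2(\mathbb{R}^2)}^{\theta}\,\|u_n-u\|_{L^p(\mathbb{R}^2)}^{1-\theta}\le C\,\|u_n-u\|_{L^2(\mathbb{R}^2)}^{\theta}\to0,
\]
which completes the proof, the case $q=2$ being already done. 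The only substantive point is the tightness step: the hypothesis $V\to\infty$ is exactly what prevents $L^2$ mass from escaping to infinity, and once that is in hand the rest is the standard ``local compactness plus interpolation'' routine.
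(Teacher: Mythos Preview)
Your argument is correct and is the standard tightness--plus--Rellich--plus--interpolation proof of this compact embedding. The paper does not give its own proof of this lemma; it simply remarks that it ``can be proved as that in \cite{YY}'', and your approach is exactly the canonical one that such a reference would contain. One cosmetic point: once the weak limit $u$ is fixed, you need not extract a \emph{further} subsequence for the local step, since a compact embedding sends weakly convergent sequences to strongly convergent ones; this avoids any dependence of the subsequence on $\varepsilon$ when you let $\varepsilon\to0$.
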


\bigskip

{\bf Proof of Theorem \ref{thm1}.} Obviously, $E_{a, q}(u)\geq E_{a,q}|_{V=0}(u)$ for every $u\in \mathcal{H}$. We claim that
\begin{equation}\label{ee-0a}
\inf_{u\in R_q} E_{a,q}|_{V=0}(u)\geq \frac{q-2}{2q}\tau_q^2,
\end{equation}
where
\begin{equation}\label{1.1-1}
R_q=\{u \in S(1): \|\nabla u\|_2^2=\tau_q^2\},
\end{equation}
which implies
\[
\inf_{u\in R_q} E_{a, q}(u)\geq\inf_{u\in R_q} E_{a,q}|_{V=0}(u)\geq\frac{q-2}{2q}\tau_q^2.
\]
Indeed, by the Gagliardo-Nirenberg inequality (see \cite{W}), we get
\[
E_{a,q}|_{V=0}(u)\geq \frac{1}{2}\int_{\mathbb{R}^2}|\nabla u(x)|^2\,dx-\frac{a}{2a_q^*}\Big(\int_{\mathbb{R}^2}|\nabla u(x)|^2\,dx\Big)^{\frac{q}{2}}\int_{\mathbb{R}^2}| u(x)|^2\,dx.
\]
Hence
\begin{equation}\label{ee-1b}
\inf_{u\in R_q} E_{a,q}|_{V=0}(u)\geq \frac{1}{2}\tau_q^2-\frac{a}{2a_q^*}\tau_q^q=\frac{q-2}{2q}\tau_q^2.
\end{equation}

Let $\varphi\in C_c(\mathbb{R}^2)$ be a nonnegative function such that $\|\varphi\|_2^2=1$, and denote $\varphi^t(x)=t\varphi(tx)$. Then,
\[
E_{a,q}(\varphi^t)=\frac{1}{2}t^2\int_{\mathbb{R}^2}|\nabla \varphi(x)|^2\,dx+\frac{1}{2}\int_{\mathbb{R}^2}V(t^{-1}x)|\varphi(x)|^2\,dx-\frac{at^q}{q+2}\int_{\mathbb{R}^2}
|\varphi(x)|^{q+2}\,dx.
\]
Let $t_1=2^{(q-2)^{-2}}$.   Since
\[
\lim_{q\to 2^+}\int_{\mathbb{R}^2}V(t_1^{-1}x)|\varphi(x)|^2\,dx=V(0)
\]
and
\begin{equation*}
\lim_{q\to 2^+}\frac{\frac{at_1^q}{q+2}\int_{\mathbb{R}^2}|\varphi(x)|^{q+2}\,dx}{\frac{1}{2}t_1^2\int_{\mathbb{R}^2}|\nabla \varphi(x)|^2\,dx}=\lim_{q\to 2^+}\frac{2a}{q+2}\frac{\|\varphi\|_{q+2}^{q+2}t_1^{q-2}}{\|\nabla \varphi\|_2^2}=+\infty,
\end{equation*}
we have
\begin{equation*}
\begin{split}
E(\varphi^{t_1})\leq 2V(0)-\frac{at_1^q}{2(q+2)}\|\varphi\|_{q+2}^{q+2}=2V(0)-\frac{a2^{q(q-2)^{-2}}}{2(q+2)}\|\varphi\|_{q+2}^{q+2}.
\end{split}
\end{equation*}
Hence
\[
\lim_{q\rightarrow 2_+}E_{a,q}(\varphi^{t_1})=-\infty.
\]
It implies that there exists $\varepsilon_0>0$ such that
\begin{equation}\label{1.1-2}
E_{a,q}(\varphi^{t_1})<0
\end{equation}
if $q \in (2, 2+\varepsilon_0)$.
Let
\[
c_q=\inf_{g\in \Gamma_q}\max_{t\in[0,1]}E_{a,q}(g(t)),
\]
where
\[
\Gamma_q=\{g\in C([0,1], S(1)): g(0)=\varphi,\ \ g(1)=\varphi^{t_1}\}.
\]
Since
$
\lim_{q\rightarrow 2_+}\frac{q-2}{2q}\tau_q^2
=+\infty,
$
we deduce
\begin{equation*}
\lim_{q\to2^+}\frac{\|\nabla \varphi^{t_1}\|_2^2}{\tau_q^2}=\lim_{q\to2^+}\frac{t_1^2\|\nabla \varphi\|_2^2}{\tau_q^2}
=\lim_{q\to2^+}\Big(\frac{2^{(q-2)^{-1}}qa}{2a_q^*}\Big)^{2(q-2)^{-1}}\|\nabla \varphi\|_2^2
=+\infty,
\end{equation*}
that is,
\[
\lim_{q\to2^+}\frac{t_1^2}{\tau_q^2}=+\infty.
\]
Hence, by \eqref{ee-0a},\eqref{1.1-2}  for any $g\in \Gamma_q$, if $q>2$ and $q$ is close to $2$, there holds
\begin{equation*}
\begin{split}
\max_{t\in [0,1]} E_{a,q}(g(t))\geq \inf_{u\in R_q}E_{a,q}|_{V=0}(u)
\geq \frac{q-2}{2q}\tau_q^2
&>\max\{E_{a,q}(\varphi), E_{a,q}(\varphi^{t_1}) \}.
\end{split}
\end{equation*}
As a result,
\begin{equation}\label{1.1-3}
c_q\geq \frac{q-2}{2q}\tau_q^2>\max\{E_{a,q}(\varphi), E_{a,q}(\varphi^{t_1})\}.
\end{equation}

Equations \eqref{1.1-2} and \eqref{1.1-3} indicate that the functional $E_{a,q}$ has the mountain pass geometry, and then there exists a $(PS)$ sequence of $E_{a,q}$ at the mountain pass level. However, such a $(PS)$ sequence may fail to be bounded. In order to bound (PS) sequence, we use the following variant mountain pass theorem.

Let $H(u,s)=e^su(e^sx)$. The functional $\mathcal{E}_{a,q}: \mathcal{H}\times \mathbb{R}\rightarrow \mathbb{R}$ is defined as
\begin{equation} \label{eb}
\begin{split}
\mathcal{E}_{a,q}(u,s)&=:E_{a,q}(H(u,s))\\
&=\frac{1}{2}e^{2s}\int_{\mathbb{R}^2}|\nabla u(x)|^2\,dx+\frac{1}{2}
\int_{\mathbb{R}^2}V(e^{-s}x)|u(x)|^2\,dx-\frac{a}{q+2}e^{sq}\int_{\mathbb{R}^2}|u(x)|^{q+2}\,dx.
\end{split}
\end{equation}
Denote the set of paths by
\[
\mathcal{P}_q=\{\gamma\in C([0,1], S(1)\times \mathbb{R}): \gamma(0)=(\varphi, 0), \gamma(1)=(\varphi^{t_1},0)\}
\]
and define
\[
b_q=\inf_{\gamma\in \mathcal{P}_q}\max_{t\in [0,1]}\mathcal{E}_{a,q}(\gamma(t))=\inf_{\gamma\in \mathcal{P}_q}
\max_{t\in [0,1]}E_{a,q}(H(\gamma(t))).
\]
Since $\Gamma_q=\{H(\gamma):\gamma\in \mathcal{P}_q\}$, we get
\begin{equation}\label{b=c}
b_q=c_q.
\end{equation}
By \eqref{1.1-3}, $\mathcal{E}_{a,q}(\gamma(0))=E_{a,q}(\varphi)$ and $\mathcal{E}_{a,q}(\gamma(1))=E_{a,q}(\varphi^{t_1})$, we have
\begin{equation*}
b_q=c_q>\max\{E_{a,q}(\varphi), E_{a,q}(\varphi^{t_1})\}=\max\{\mathcal{E}_{a,q}(\gamma(0)), \mathcal{E}_{a,q}(\gamma(1))\}.
\end{equation*}
Let $Y=\mathcal{H}\times \mathbb{R}$ be the space with the norm
\[
\|(u,s)\|_Y^2=\|(u,s)\|_{\mathcal{H}}^2+\|s\|_\mathbb{R}^2\ \ {\rm for \ \ any}\ \ (u,s)\in Y.
\]
 Then, $S(1)\times \mathbb{R}$ is a submanifold of $Y$ of codimension $1$ and its tangent subspace at a given point $(u,s)\in S(1)\times \mathbb{R}$ is
\[
T_{(u,s)} \big(S(1)\times \mathbb{R}\big)=\{(v_1,s_1)\in Y: \langle v_1,u\rangle_{L^2(\mathbb{R}^2)}=0\}.
\]
Denote by $\mathcal{E}_{a,q}|_{S(1)\times \mathbb{R}}$ the trace of $\mathcal{E}_{a,q}$ on $S(1)\times \mathbb{R}$. Then,  $\mathcal{E}_{a,q}|_{S(1)\times \mathbb{R}}$ is a $C^1$ functional on $S(1)\times \mathbb{R}$, and for any $(u,s)\in S(1)\times \mathbb{R}$,
\[\langle\mathcal{E}_{a,q}'|_{S(1)\times \mathbb{R}}((u,s)), w\rangle=\langle\mathcal{E}_{a,q}'((u,s)), w\rangle\ \
{\rm for} \ \ w\in T_{(u,s)} \big(S(1)\times \mathbb{R}\big).
\]
Moreover,
$$
\mathcal{E}_{a,q}'|_{S(1)\times \mathbb{R}}\in \Big[T_{(u,s)} \big(S(1)\times \mathbb{R}\big)\Big]^{-1}.
$$
By \eqref{b=c}, there exists $\{(g_n, 0)\}\subset\mathcal{P}_q$ such that
\begin{equation}
c_q=\lim_{n\rightarrow \infty}\sup_{g_n}E_{a,q}(g_n)=\lim_{n\rightarrow \infty}\sup_{(g_n, 0)}\mathcal{E}_{a,q}(g_n, 0)=b_q.
\end{equation}
Because $E_{a,q}(|g_n|)\leq E_{a,q}(g_n)$, let $\gamma_n=(|g_n|,0)$, we have
\begin{equation} \label{1.1-5}
\lim_{n\rightarrow \infty}\sup_{\gamma_n}\mathcal{E}_{a,q}(\gamma_n)=b_q=c_q.
\end{equation}
By Theorem 3.2 in \cite{GS}, there exists $\{(w_n, s_n)\}\subset S(1)\times \mathbb{R}$ such that

\begin{equation} \label{1.1-6}
\lim_{n\rightarrow \infty}\mathcal{E}_{a,q}(w_n, s_n)=b_q=c_q,
\end{equation}
\begin{equation} \label{1.1-7}
\lim_{n\rightarrow \infty}\|\mathcal{E}_{a,q}'|_{S(1)\times \mathbb{R}}(w_n, s_n)\|_{\mathcal{Z}_n}=0\ \ with \ \ \mathcal{Z}_n=\big[T_{(w_n,s_n)} \big(S(1)\times \mathbb{R}\big)\big]^{-1},
\end{equation}
and
\begin{equation*}
\lim_{n\rightarrow \infty} dist((w_n, s_n), (|g_n|, 0))=0.
\end{equation*}

Let $u_n=H(w_n, s_n)$. Choose $v_n\in |g_n|$ such that
$$
\|u_n-v_n\|^2_{\mathcal{H}}+|s_n|^2=\Big(dist((w_n, s_n), (|g_n|, 0))\Big)^2.
$$
Then
\begin{equation} \label{1.1-8}
\lim_{n\rightarrow \infty}\|u_n-v_n\|^2_{\mathcal{H}}+|s_n|^2=0.
\end{equation}
By \eqref{1.1-6},
\begin{equation} \label{1.1-9}
\lim_{n\rightarrow \infty}E_{a,q}(u_n)=\lim_{n\rightarrow \infty}\mathcal{E}_{a,q}(w_n,s_n)=c_q.
\end{equation}
For any $(\varphi,\tau)\in T_{(u,s)}\big(S(1)\times \mathbb{R}\big)$ with $(u,s)\in S(1)\times \mathbb{R}$, we claim that
\begin{equation} \label{1.1-10}
\langle \mathcal{E}_{a,q}'|_{S(1)\times \mathbb{R}}(u,s), (\varphi,\tau)\rangle=
\langle E_{a,q}'|_{S(1)}(H(u,s)), H(\varphi,s)\rangle+Q_q(H(u,s))\tau,
\end{equation}
where
\begin{equation} \label{1.1-11}
Q_q(u)=\int_{\mathbb{R}^2}|\nabla u(x)|^2\,dx-\frac{1}{2}\int_{\mathbb{R}^2}x\cdot \nabla V(x)|u(x)|^2\,dx-\frac{qa}{q+2}\int_{\mathbb{R}^2}|u(x)|^{q+2}\,dx.
\end{equation}

Indeed,
\begin{equation*}
\begin{split}
&~~\langle \mathcal{E}'_{a,q}|_{S(1)\times \mathbb{R}}(u,s), (\varphi,\tau)\rangle\\
&=
\langle \mathcal{E}'_{a,q}(u,s), (\varphi,\tau)\rangle\\
&=\lim_{t\rightarrow 0}t^{-1}\big[\mathcal{E}_{a,q}(u+t\varphi,s+t\tau)-\mathcal{E}_{a,q}(u,s)\big]\\
&=\lim_{t\rightarrow0}t^{-1}\big[\mathcal{E}_{a,q}(u+t\varphi,s+t\tau)-\mathcal{E}_{a,q}(u,s+t\tau)+
\mathcal{E}_{a,q}(u,s)-\mathcal{E}_{a,q}(u+t\varphi,s)\big]\\
&~~+\lim_{t\rightarrow 0}t^{-1}\big[\mathcal{E}_{a,q}(u,s+t\tau)-\mathcal{E}_{a,q}(u,s)\big]+
\lim_{t\rightarrow 0}t^{-1}\big[\mathcal{E}_{a,q}(u+t\varphi,s)-\mathcal{E}_{a,q}(u,s)\big]\\
&:=I_1+I_2+I_3.
\end{split}
\end{equation*}
We deduce that
\begin{equation*}
\begin{split}
I_1=&\lim_{t\rightarrow0}t^{-1}\Big\{\frac{1}{2}(e^{2(s+t\tau)}-e^{2s})\int_{\mathbb{R}^2}(|\nabla u+t\nabla \varphi|^2-|\nabla u|^2)\,dx\\
&+\frac{1}{2}\int_{\mathbb{R}^2}\big[\big(\frac{|x|_b}{s+t\tau}-A\big)^2-\big(\frac{|x|_b}{s}-A\big)^2\big]
\big[|u+t\varphi|^2-|u|^2\big]\,dx\\
&-\frac{a}{q+2}\big[e^{q(s+t\tau)}-e^{qs}\big]
\int_{\mathbb{R}^2}\big[|u+t\varphi|^{q+2}-|u|^{q+2}\big]\,dx\Big\}\\
=&\lim_{t\rightarrow0}t^{-1}\Big\{\frac{1}{2}e^{2s}(e^{2t\tau}-1)\int_{\mathbb{R}^2}(2t\nabla u\cdot \nabla \varphi+t^2|\nabla \varphi|^2)\,dx\\
&-\frac{t\tau}{2s(s+t\tau)}\int_{\mathbb{R}^2}|x|_b\big(\frac{|x|_b}{s+t\tau}+\frac{|x|_b}{s}-2A\big)(2tu\varphi+t^2\varphi^2)
\,dx\\
&-\frac{a}{q+2}e^{qs}(e^{qt\tau}-1)\int_{\mathbb{R}^2}\big[|u+t\varphi|^{q+2}-|u|^{q+2}\big]\,dx\Big\}\\
=&0,
\end{split}
\end{equation*}
and
\begin{equation*}
I_2=\lim_{t\rightarrow0}t^{-1}\big[E_{a,q}(e^{s+t\tau}u(e^{s+t\tau}x))-E_{a,q}(e^su(e^sx))\big]=\tau Q_q(e^su(e^sx)).
\end{equation*}
Since $\langle \varphi,u\rangle _{L^2(\mathbb{R}^2)}=0$, we have
\[
e^s\varphi(e^sx)\in T_{e^su(e^sx)}S(1):=\{\psi\in \mathcal{H}:\langle \psi, e^su(e^sx)  \rangle_{L^2(\mathbb{R}^2)}=0\}
\]
and
\begin{equation*}
\begin{split}
I_3&=\lim_{t\rightarrow0}t^{-1}\big[E_{a,q}(e^su(e^sx)+te^s\varphi(e^sx))-E_{a,q}(e^su(e^sx))\big]\\
&=\langle E_{a,q}'(e^su(e^sx)), e^s\varphi(e^sx)\rangle\\
&=\langle E_{a,q}'|_{S(1)}(e^su(e^sx)), e^s\varphi(e^sx)\rangle.
\end{split}
\end{equation*}
As a result, the claim \eqref{1.1-10} holds.

For any $(\varphi,\tau)\in T_{(w_n,\tau_n)}\big(S(1)\times \mathbb{R}\big)$, we infer from \eqref{1.1-7} and \eqref{1.1-10} that,
   \begin{equation} \label{1.1-12}
   \langle E_{a,q}'|_{S(1)}(u_n), H(\varphi, s_n)\rangle+Q_q(u_n)\tau=o(\|(\varphi, \tau)\|_{Y}^2).
\end{equation}
Choosing particularly $(\varphi,\tau)=(0,1)$ in \eqref{1.1-12}, we obtain
\begin{equation} \label{1.1-13}
Q_q(u_n)\rightarrow 0\ \ {\rm as}\ \ n\to \infty.
\end{equation}

On the other hand, for any $\psi\in T_{u_n}S(1)$, we may choose $\varphi$ such that $\psi(x)=e^{s_n}\varphi(e^{s_n}x)$. Therefore,
$$
(\varphi,s_n)\in T_{(w_n, s_n)}\big(S(1)\times \mathbb{R}\big).
$$
Taking $\tau=0$ in \eqref{1.1-12}, we have
\[
\langle E'_{a,q}|_{S(1)}(u_n), \psi\rangle=o(\|\varphi\|_{\mathcal{H}}^2).
\]
By \eqref{1.1-8},
\begin{equation} \label{1.1-14aa}
|s_n|\rightarrow 0
\end{equation}
as $n\rightarrow \infty$ and $\|\psi\|_{\mathcal{H}}\leq 2\|\varphi\|_{\mathcal{H}}$ . Consequently,
$$
\langle E'_{a,q}|_{S(1)}(u_n), \psi\rangle=o(\|\psi\|_{\mathcal{H}}^2),
$$
that is
\begin{equation} \label{1.1-14}
\|E_{a,q}'|_{S(1)}(u_n)\|_{\mathcal{H}^{-1}}\rightarrow 0\ \ as \ \ n\rightarrow \infty.
\end{equation}
 The identity
\begin{equation} \label{1.1-14a}
\frac{q-2}{2}\int_{\mathbb{R}^2}|\nabla u_n(x)|^2\,dx+\frac{1}{2}\int_{\mathbb{R}^2}
\big[q(|x|_b-A)^2+2|x|_b(|x|_b-A)\big]|u_n(x)|^2\,dx=qE_{a,q}(u_n)-Q_q(u_n)
\end{equation}
and $\|u_n\|_2^2=1$ yield
\begin{equation*}
\begin{split}
&qE_{a,q}(u_n)-Q_q(u_n)\\
&\geq \frac{q-2}{2}\int_{\mathbb{R}^2}|\nabla u_n(x)|^2\,dx+\frac{1}{2}\int_{\mathbb{R}^2}
q(|x|_b-A)^2|u_n(x)|^2\,dx+\frac{1}{2}\int_{\mathbb{R}^2}|x|_b^2|u_n(x)|^2\,dx-\frac{1}{2}A^2.
\end{split}
\end{equation*}
This together with \eqref{1.1-9} and \eqref{1.1-13} implies that $\{u_n\}$ is bounded in $\mathcal{H}$. By Lemma \ref{lem:2.1}, we may assume that
\begin{equation} \label{1.1-15a}
u_n\rightharpoonup u_q\in \mathcal{H}\ \ {\rm weakly\ \ in}\ \ \mathcal{H};
\end{equation}
and
\begin{equation} \label{1.1-15b}
u_n\to u_q \ \ {\rm strongly\ \ in}  \ \ L^\gamma(\mathbb{R}^2)\ \ {\rm for\ \ any}\ \ \gamma\geq 2.
\end{equation}
We know from \eqref{1.1-12} and Lemma 3 in [5] that  there exists a $\mu_q^n$ such that
\begin{equation} \label{1.1-15}
-\Delta u_n+Vu_n-a|u_n|^qu_n-\mu_q^nu_n\to 0\ \ {\rm in} \ \ \mathcal{H}^{-1},
\end{equation}
and then,
\begin{equation} \label{1.1-16}
\mu_q^n=\int_{\mathbb{R}^2}|\nabla u_n(x)|^2\,dx+\int_{\mathbb{R}^2}V(x)|u_n(x)|^2\,dx-a\int_{\mathbb{R}^2}|u_n(x)|^{q+2}\,dx+o(\|u_n\|_{\mathcal{H}}),
\end{equation}
which is bounded. Without loss of generality, we assume that $\mu_q^n\to \mu_q$ as $n\to \infty$. Passing the limit $n\to\infty$ in \eqref{1.1-15}, we obtain
\begin{equation} \label{1.1-17}
-\Delta u_q+Vu_q-a|u_q|^qu_q-\mu_qu_q=0.
\end{equation}
By \eqref{1.1-16} and \eqref{1.1-17}, we have
\begin{equation*}
\begin{split}
&\lim_{n\to\infty}\bigg(\int_{\mathbb{R}^2}|\nabla u_n(x)|^2\,dx+\int_{\mathbb{R}^2}V(x)|u_n(x)|^2\,dx\bigg)\\
&=a\int_{\mathbb{R}^2}|u_q|^{q+2}\,dx+\mu_q\\
&=\int_{\mathbb{R}^2}|\nabla u_q(x)|^2+\int_{\mathbb{R}^2}V(x)|u_q(x)|^2\,dx,
\end{split}
\end{equation*}
 namely,
 $$
 \lim_{n\to \infty}\|u_n\|_{\mathcal{H}}=\|u_q\|_{\mathcal{H}}.
 $$
By \eqref{1.1-8} and \eqref{1.1-14aa}, we obtain $v_n\to u_q$  in $\mathcal{H}$. Note that $v_n$ is positive, we get $u_q\geq 0$ and $u_q$ is a solution of \eqref{1.1-17}.  Then  the strong maximum principle implies $u_q>0$.

Since $u_n\to u_q$  in $\mathcal{H}$, by \eqref{1.1-9} and \eqref{1.1-13},
$$
E_{a,q}(u_q)=c_q
$$
and
\begin{equation}\label{1.1-18}
Q_q(u_q)=\int_{\mathbb{R}^2}|\nabla u_q(x)|^2\,dx-\int_{\mathbb{R}^2}|x|_b(|x|_b-A)|u_q(x)|^2\,dx
-\frac{qa}{q+2}\int_{\mathbb{R}^2}|u_q(x)|^{q+2}\,dx=0.
\end{equation}
The proof is complete.\quad$\Box$

\bigskip

\section{Energy estimates and asymptotic behavior}

\bigskip

In this section, we first establish an asymptotic expansion of the energy $E_{a,q}(u_q)$ at the mountain pass point $u_q$. This relies on, among other things, a carefully
choosing a path. Next, we study asymptotic behavior of $u_q$ as $q\to2_+$ and prove Theorem \ref{thm3} and Theorem \ref{thm4}.
\begin{Proposition}\label{ee} There holds
\begin{equation*}
\frac{q-2}{2q}\tau_q^2\leq E_{a,q}(u_q)\leq \frac{q-2}{2q}\tau_q^2+\tau_q^{-2}\Big(\frac{1}{2}\int_{\mathbb{R}^2}|x|^2\frac{|Q(x)|^2}{\|Q\|_2^2}\,dx+o(1)\Big)\ \ {\rm as} \ \ q\to 2_+.
\end{equation*}
\end{Proposition}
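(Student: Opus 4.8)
The lower bound is immediate: by \eqref{1.1-3} we have $E_{a,q}(u_q)=c_q\geq \frac{q-2}{2q}\tau_q^2$, so the entire task is the upper bound. The plan is to produce, for each $q>2$ close to $2$, an admissible path $g\in\Gamma_q$ (from $\varphi$ to $\varphi^{t_1}$) along which the maximum of $E_{a,q}$ does not exceed the claimed quantity; since $c_q\leq\max_{t}E_{a,q}(g(t))$ this finishes the proof. The natural building block is the scaled ground state $\widetilde\varphi_q(x)=\frac{\tau_q}{\|\varphi_q\|_2}\varphi_q(\tau_q x)$ of Lemma \ref{lem:3.1}, which satisfies $E_{a,q}|_{V=0}(\widetilde\varphi_q)=\frac{q-2}{2q}\tau_q^2$ and $\|\nabla\widetilde\varphi_q\|_2^2=\tau_q^2$. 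The idea is to take a path of the form $g(t)=t^{-1}\circ(\text{dilation})\circ\widetilde\varphi_q$ — more precisely, interpolate using the one-parameter family $s\mapsto e^{s}\widetilde\varphi_q(e^{s}x)$, which stays on $S(1)$ — joining $\varphi$ at one end and $\varphi^{t_1}$ at the other, passing through $\widetilde\varphi_q$ at the "top." Because $\widetilde\varphi_q$ is the unique critical point of $E_{a,q}|_{V=0}$ at level $\tilde c_q$ and $s\mapsto E_{a,q}|_{V=0}(e^{s}\widetilde\varphi_q(e^s\cdot))$ has a strict max at $s=0$, one arranges the endpoints so that the $V=0$ energy along the path is maximized at $\widetilde\varphi_q$ with value exactly $\frac{q-2}{2q}\tau_q^2$; the endpoint contributions are negligible (indeed $E_{a,q}(\varphi^{t_1})<0$ and $E_{a,q}(\varphi)$ is bounded, both $\ll \frac{q-2}{2q}\tau_q^2\to\infty$).

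The remaining term is the potential contribution $\frac12\int_{\mathbb{R}^2}V(x)|g(t)|^2\,dx$ evaluated near the top of the path, i.e. at (a neighborhood of) $\widetilde\varphi_q$. Here one computes, using the change of variables $y=\tau_q x$,
\[
\frac12\int_{\mathbb{R}^2}V(x)\,|\widetilde\varphi_q(x)|^2\,dx
=\frac12\int_{\mathbb{R}^2}\Big(\sqrt{\tfrac{x_1^2}{b_1^2}+\tfrac{x_2^2}{b_2^2}}-A\Big)^2\frac{\tau_q^2}{\|\varphi_q\|_2^2}|\varphi_q(\tau_q x)|^2\,dx
=\frac{1}{\|\varphi_q\|_2^2}\int_{\mathbb{R}^2}\Big(|\tau_q^{-1}y|_b-A\Big)^2|\varphi_q(y)|^2\,dy,
\]
and since $\tau_q\to\infty$ as $q\to 2_+$, the factor $(|\tau_q^{-1}y|_b-A)^2\sim A^2$ on the support where $\varphi_q$ concentrates — but $A^2$ is of order $1$, not $\tau_q^{-2}$. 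This forces a recentering: one must shift $\widetilde\varphi_q$ to concentrate at a point $x_q$ on the ellipse $\{|x|_b=A\}$, say $x_q=(b_1A,0)$, i.e. use $\widetilde\varphi_q(x-x_q)$, at which $V(x_q)=0$ and $\nabla V(x_q)=0$, so that $V(x_q+\tau_q^{-1}y)=\frac12\tau_q^{-2}\, y^{T}D^2V(x_q)\,y+o(\tau_q^{-2})$ by Taylor expansion, uniformly on compacts; using the exponential decay of $\varphi_q$ from Lemma \ref{lem:5.1} and then $\varphi_q\to Q$ in $H^1$ (hence in the weighted $L^2$ with weight $|y|^2$, via the uniform decay), one gets
\[
\frac12\int V(x_q+x)\,|\widetilde\varphi_q(x)|^2\,dx
=\tau_q^{-2}\Big(\tfrac14\int_{\mathbb{R}^2} y^{T}D^2V(x_q)y\,\frac{|Q(y)|^2}{\|Q\|_2^2}\,dy+o(1)\Big).
\]
By rotational symmetry of $Q$ the quadratic form integrates against $|Q|^2$ to give $\tfrac12(\lambda_1+\lambda_2)\int|y|^2|Q|^2/\|Q\|_2^2$ where $\lambda_1,\lambda_2$ are the eigenvalues of $D^2V(x_q)$; one checks $\lambda_1+\lambda_2=2/b_1^2$ at $x_q=(b_1A,0)$... wait — the stated bound has coefficient $\frac12\int|x|^2|Q|^2/\|Q\|_2^2$ with no geometric constant, so in fact one should reexamine the normalization, but in any case the correct path construction yields a bound of the form $\frac{q-2}{2q}\tau_q^2+\tau_q^{-2}(C+o(1))$, and one optimizes the concentration point to get the minimal such $C$, matching the proposition's right-hand side. (The coefficient $\frac12\int|x|^2|Q|^2/\|Q\|_2^2$ should be read as the value at the optimal point; I would double-check this against the Hessian computation.)

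The main obstacle is \textbf{the path construction, not the asymptotic expansion}. The expansion at $\widetilde\varphi_q$ is routine Taylor analysis once the concentration point is chosen; the delicate point is to exhibit a path $g\in\Gamma_q$ whose \emph{entire} trajectory stays below $\frac{q-2}{2q}\tau_q^2+\tau_q^{-2}(C+o(1))$ — in particular, along the two "legs" connecting $\varphi$ and $\varphi^{t_1}$ to the recentered, rescaled profile $\widetilde\varphi_q(\cdot-x_q)$. One handles each leg by a combination of a dilation $s\mapsto e^{s}u(e^s\cdot)$ (which, by the analysis in \cite{JE} encoded in $E_{a,q}|_{V=0}(e^{s}u(e^{s}\cdot))$, has a single maximum in $s$, with value $\le\tilde c_q$ when $u$ is suitably normalized) and a linear homotopy of profiles performed at very small or very large scale where the nonlinear term dominates favorably; on these legs the $V$-term is controlled crudely (it is $O(1)$ or smaller, negligible compared with $\tau_q^{-2}$ only after checking the leg energies stay below the top — which they do because the top is of order $\tau_q^2$). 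Concretely: near the $\varphi^{t_1}$ end, $E_{a,q}<0$; near the $\varphi$ end, $E_{a,q}=O(1)$; and in the transition to the high scale $\tau_q$ one rides the graph of $s\mapsto E_{a,q}|_{V=0}(e^s\widetilde\varphi_q(e^s\cdot))$, adding at most the $O(1)$ potential contribution, still far below $\frac{q-2}{2q}\tau_q^2\to\infty$. Only in a shrinking neighborhood of the exact profile $\widetilde\varphi_q(\cdot-x_q)$ does the energy approach the top, and there the refined $\tau_q^{-2}$ expansion above gives the stated bound. Assembling these pieces into one continuous $g$ and verifying the maximum is attained (up to $o(\tau_q^{-2})$) at the center is the technical heart of the argument.
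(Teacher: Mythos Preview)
Your plan is essentially the paper's proof. The paper also builds the path in three pieces: a linear homotopy $g_1$ from $\varphi$ to a small-scale dilate $w_q^{\tilde t_0}$ of the recentered profile $w_q(x)=\frac{\tau_q}{\|\varphi_q\|_2}\varphi_q(\tau_q(x-x_0))$ with $x_0=(b_1A,0)$, a linear homotopy $g_2$ between a large-scale dilate $w_q^{\tilde t_1}$ and $\varphi^{t_1}$, and the dilation family $g_3(s)=w_q^{s\tilde t_0+(1-s)\tilde t_1}$ in between. On $g_1$ the energy is bounded by $\frac{q-2}{3q}\tau_q^2$, on $g_2$ it is negative, so the maximum is realized on $g_3$, exactly as you sketch.

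The one step you flag as ``technical heart'' but leave implicit is in fact nontrivial and deserves an explicit argument: the maximizer of $t\mapsto E_{a,q}(w_q^t)$ is \emph{not} at $t=1$, and you need to know it is close enough to~$1$ that the potential term there still has the claimed $\tau_q^{-2}$ size. The paper does this by writing $f(t)=\frac12\tau_q^2 t^2-\frac1q\tau_q^2 t^q+\frac12\int V(t^{-1}x)|w_q|^2$, using $f'(t_q)=0$ and the $O(1)$ bound on the potential derivatives to deduce $(q-2)^{3/2}\tau_q^2|1-t_q^{q-2}|\le C$, and from this extracting $|t_q-1|\le\tau_q^{-3/2}$. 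Only then can one expand $\int V(x)|w_q^{t_q}|^2\,dx$ and get the $\tau_q^{-2}$ term; your Taylor/Hessian expansion at $t=1$ is correct in spirit but must be transported to $t_q$, which requires this estimate.

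Your suspicion about the constant is well-founded: the paper's own computation (their (ee-21)) produces $\frac{1}{2b_1^2}\int|x|^2\frac{|Q|^2}{\|Q\|_2^2}$, not the $\frac12\int|x|^2\frac{|Q|^2}{\|Q\|_2^2}$ that appears in the statement; the Hessian of $V$ at $(b_1A,0)$ is $\mathrm{diag}(2/b_1^2,0)$, which combined with the radiality of $Q$ gives exactly the $1/b_1^2$ factor you anticipated. So the discrepancy is a typo in the stated proposition, not an error in your plan.
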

\begin{proof}
By \eqref{1.1-3} and $c_q=E_{a,q}(u_q)$, it suffices to prove that
\[
E_{a,q}(u_q)
\leq \frac{q-2}{2q}\tau_q^2+\tau_q^{-2}\Big(\frac{1}{2}\int_{\mathbb{R}^2}|x|^2\frac{|Q(x)|^2}{\|Q\|_2^2}\,dx+o(1)\Big)\ \ {\rm as} \ \ q\to 2_+.
\]
To this purpose, we will construct a path $g\in \Gamma_q$ linking $\varphi$ and $\varphi^{t_1}$ so that
\[
E_{a,q}(u_q)\leq \max_{t\in[0,1]}E_{a,q}(g(t))\leq \frac{q-2}{2q}\tau_q^2+
\tau_q^{-2}\Big(\frac{1}{2}\int_{\mathbb{R}^2}|x|^2\frac{|Q(x)|^2}{\|Q\|_2^2}\,dx+o(1)\Big)
.
\]
The path $g$ is constructed in three parts.

First, we construct a path $g_1$ linking $\varphi$ to some $w_q^{\tilde{t}_0}$ and estimate $E_{a,q}(g_1(s))$.
Let $\varphi_q$ be the positive solution of \eqref{eq:1.9}.  Denote
$$
w_q(x)=\frac{\tau_q}{\|\varphi_q\|_2}\varphi_q(\tau_q (x-x_0)
$$
with
$x_0\in \mathcal{Z}$. By Lemma 8.12 in \cite{CA}, $\varphi_q$ satisfies
\begin{equation} \label{ee-2}
\int_{\mathbb{R}^2}|\nabla \varphi_q(x)|^2\,dx=\int_{\mathbb{R}^2}|\varphi_q(x)|^2\,dx
=\frac{2}{q+2}\int_{\mathbb{R}^2}|\varphi_q(x)|^{q+2}\,dx.
\end{equation}
Note that $a_q^*=\|\varphi_q\|_2^q$, we obtain
\begin{equation} \label{ee-3}
\int_{\mathbb{R}^2}|\nabla w_q(x)|^2\,dx=\tau_q^2
\end{equation}
and
\begin{equation} \label{ee-4}
\int_{\mathbb{R}^2}|w_q(x)|^{q+2}\,dx=\frac{q+2}{2a_q^*}\tau_q^q.
\end{equation}
Choosing $\tilde{t}_0=\sqrt{\frac{q-2}{12q}}$ such that $w_q^{\tilde{t}_0}(x) = \tilde{t}_0w_q(\tilde{t}_0x)$ satisfies
\begin{equation} \label{ee-5a}
\int_{\mathbb{R}^2}|\nabla w_q^{\tilde{t}_0}|^2\,dx=\frac{q-2}{12q}\tau_q^2,
\end{equation}
we define
\[
g_1(s):=\frac{sw_q^{\tilde{t}_0}+(1-s)\varphi}{\|sw_q^{\tilde{t}_0}+(1-s)\varphi\|_2},\ \ s\in[0,1],
\]
where $\varphi\in C_c(\mathbb{R}^2)$ is a nonnegative function with $\|\varphi\|_2^2=1$ given in $\Gamma_q$.

By \eqref{ee-5a} and the fact that $\|w_q^{\tilde{t}_0}\|_2^2=\|\varphi\|_2^2=1$, we have
\begin{equation}\label{ee-5}
\begin{split}
\frac{1}{2}\|\nabla g_1(s)\|_2^2
&\leq 2(s^2\|\nabla w_q^{\tilde{t}_0}\|_2^2+(1-s)^2\|\nabla \varphi\|_2^2)\\
&\leq 2(\|\nabla w_q^{\tilde{t}_0}\|_2^2+\|\nabla \varphi\|_2^2)\\
&\leq \frac{(q-2)}{4q}\tau_q^2,
\end{split}
\end{equation}
since $\lim_{q\to 2^+}\frac{q-2}{q}\tau_q^2=+\infty$. On the other hand,
\begin{equation}\label{ee-6a}
\begin{split}
\int_{\mathbb{R}^2}V(x)|g_1(x)|^2\,dx\leq 4\big(\int_{\mathbb{R}^2}V(x)|w_q^{\tilde{t}_0}|^2\,dx+\int_{\mathbb{R}^2}V(x)|\varphi|^2\,dx\big).
\end{split}
\end{equation}
By Lemma \ref{lem:5.1},
$$\varphi_q(x)\leq Ce^{-\delta |x|},$$
where $C>0,\ \ \delta>0$ independent of $q$,   and Lebesgue dominated theorem,
\begin{equation}\label{ee-6}
\begin{split}
\int_{\mathbb{R}^2}V(x)|w_q^{\tilde{t}_0}(x)|^2\,dx
&=\int_{\mathbb{R}^2}\big(\tilde{t}_0^{-1}|\tau_q^{-1}y+x_0|_b-A\big)^2\frac{|\varphi_q(y)|^2}{\|\varphi_q\|_2^2}\,dy\\
&\leq 2\big(\tilde{t}_0^{-2}\|\varphi_q\|_2^{-2}\int_{\mathbb{R}^2}|\tau_q^{-1}y+x_0|_b^2|\varphi_q(y)|^2\,dy+A^2\big)\\
&=2\big(\tilde{t}_0^{-2} (\|Q\|_2^{-2}\int_{\mathbb{R}^2}|x_0|_b^2|Q(y)|^2\,dy + o(1)) +A^2\big] \\
&=2\big[\tilde{t}_0^{-2}(A^2+o(1))+A^2\big]\\
&\leq 4A^2\tilde{t}_0^{-2}\ \ {\rm as} \ \ q\to 2_+.
\end{split}
\end{equation}
Since $\tilde{t}_0^{2}=\frac{q-2}{12q}$, \eqref{ee-5} and \eqref{ee-6} yield
\begin{equation}\label{ee-7}
\begin{split}
E_{a,q}&\leq \frac{3(q-2)}{12q}\tau_q^2+\frac{48qA^2}{q-2}\\
&\leq \frac{(q-2)}{3q}\tau_q^2
\end{split}
\end{equation}
for $q>2$ and $q$ close to $2$.

Next, we construct the second part $g_2$ of the path $g$. Choose $\tilde{t}_1=\tau_q^{-1}t_1:=\tau_q^{-1}2^{(q-2)^{-2}}$. Let
\begin{equation*}
g_2(s)=\frac{sw_q^{\tilde{t}_1}+(1-s)\varphi^{t_1}}{\|sw_q^{\tilde{t}_1}+(1-s)\varphi^{t_1}\|_2},\ \ s\in[0,1].
\end{equation*}
By \eqref{ee-3}, \eqref{ee-4} and $\|w_q^{\tilde{t}_1}\|_2^2=\|\varphi^{t_1}\|_2^2=1$, we have
\begin{equation} \label{ee-8}
\begin{split}
\frac{1}{2}\|\nabla g_2(s)\|_2^2
&\leq 2s^2\|\nabla w_q^{\tilde{t}_1}\|_2^2+2(1-s)^2\|\nabla \varphi^{t_1}\|_2^2\\
&=2^{\frac{2}{(q-2)^2}+1}s^2+\|\nabla \varphi\|_2^2 2^{\frac{2}{(q-2)^2}+1}(1-s)^2
\end{split}
\end{equation}
and
\begin{equation}\label{ee-9}
\begin{split}
\frac{a}{q+2}\int_{\mathbb{R}^2}|g_2(s)|^{q+2}\,dx
&\geq \frac{4^{-q-2}a}{q+2}\int_{\mathbb{R}^2}|sw_q^{\tilde{t}_1}+(1-s)\varphi^{t_1}|^{q+2}\,dx\\
&\geq \frac{4^{-q-3}a}{q+2}\big[\int_{\mathbb{R}^2}|sw_q^{\tilde{t}_1}|^{q+2}\,dx
+\int_{\mathbb{R}^2}|(1-s)\varphi^{t_1}|^{q+2}\,dx\big]\\
&=\frac{4^{-q-3}a}{q+2}\big[\frac{q+2}{2a_q^*}s^{q+2}2^{\frac{q}{(q-2)^2}}+(1-s)^{q+2}2^{\frac{q}{(q-2)^2}}
\int_{\mathbb{R}^2}|\varphi|^{q+2}\,dx\big].
\end{split}
\end{equation}
If $0\leq s\leq \frac{1}{2}$, then
\begin{equation}\label{ee-10}
\begin{split}
\frac{1}{2}\|\nabla g_2(s)\|_2^2-\frac{a}{q+2}\int_{\mathbb{R}^2}|g_2(s)|^{q+2}\,dx
&\leq C_12^{\frac{2}{(q-2)^2}}-C_2\frac{4^{-q-3}a}{q+2}2^{\frac{q}{(q-2)^2}}\\
&\leq -C_2\frac{4^{-q-3}a}{2(q+2)}2^{\frac{q}{(q-2)^2}}.
\end{split}
\end{equation}
Similarly, the inequality \eqref{ee-10} holds for $\frac{1}{2}\leq s\leq 1$. We may show as \eqref{ee-6} that
\begin{equation} \label{ee-11}
\int_{\mathbb{R}^2}V(x)|g_2(s)|^2\,dx\leq C.
\end{equation}
Therefore, we deduce from from \eqref{ee-10} and \eqref{ee-11} that
\begin{equation} \label{ee-11a}
E_{a,q}(g_2(s))<0
\end{equation}
for $q>2$ and close to $2$.

Finally, we construct a path linking $w_q^{\tilde{t}_0}$ and $w_q^{\tilde{t}_1}$. Since
$$\|\nabla w_q^{\tilde{t}_0}\|_2^2=\frac{q-2}{12q}\tau_q^2\ \ {\rm and}\ \  \|\nabla w_q^{\tilde{t}_1}\|_2^2=2^{\frac{2}{(q-2)^2}},
$$
there exists a $\tilde{t}_2\in (\tilde{t}_0, \tilde{t}_1)$ such that $\|\nabla w_q^{\tilde{t}_2}\|_2^2=\tau_q^2$. By \eqref{ee-0a}, we have
\begin{equation} \label{ee-12}
E_{a,q}(w_q^{\tilde{t}_2})\geq \frac{q-2}{2q}\tau_q^2.
\end{equation}
Let
\begin{equation*}
g_3(s)=w_q^{s\tilde{t}_0+(1-s)\tilde{t}_1}, \ \ s\in[0,1].
\end{equation*}

 Now, we define a path $g$ by $g_1$, $g_2$ and $g_3$, that is,
 \begin{equation*}
g(s)=\left\{
\begin{array}{ll}
\displaystyle
g_1(3s),\ \ &if \ \ 0\leq s\leq \frac{1}{3};\\[6pt]
g_3(3s-1), \ \ &if \ \ \frac{1}{3}\leq s\leq \frac{2}{3};\\[6pt]
g_2(3s-2), \ \ &if \ \ \frac{2}{3}\leq s\leq 1.
\end{array}
\right.
\end{equation*}
Obviously, $g\in\Gamma_q$.

It follows from \eqref{ee-7}, \eqref{ee-11a} and \eqref{ee-12} that
\begin{equation}\label{ee-12a}
\begin{split}
\max_{0\leq s\leq 1}E_{a,q}(g(s))
&=\max_{\frac{1}{3}\leq s\leq \frac{2}{3}}E_{a,q}(g_3(3s-1))\\
&=\max_{\tilde{t}_0\leq t\leq \tilde{t}_1}E_{a,q}(w_q^t).
\end{split}
\end{equation}
Let $f(t)=E_{a,q}(w_q^t)$. It implies by \eqref{ee-3} and \eqref{ee-4} that
\[
f(t)=\frac{1}{2}\tau_q^2t^2-\frac{1}{q}\tau_q^2t^q+\frac{1}{2}\int_{\mathbb{R}^2}V(t^{-1}x)|w_q(x)|^2\,dx.
\]
By \eqref{ee-7}, \eqref{ee-11a} and \eqref{ee-12}, there exists $t_{q}\in (\tilde{t}_0, \tilde{t}_1)$ such that
\begin{equation} \label{ee-13}
f(t_{q})=\max_{t\in(\tilde{t}_0, \tilde{t}_1)}f(t)
\end{equation}
and
\begin{equation} \label{ee-14}
f'(t_{q})=\tau_q^2t_{q}-\tau_q^2t_{q}^{q-1}-\frac{1}{t_{q}^3}\int_{\mathbb{R}^2}|x|_b^2|w_q(x)|^2\,dx
+\frac{A}{t_{q}^2}\int_{\mathbb{R}^2}|x|_b|w_q(x)|^2\,dx.
\end{equation}
By Lemma 2.1 and the Lebegue dominated theorem, we have
\begin{equation}\label{ee-15}
\begin{split}
\int_{\mathbb{R}^2}|x|_b^2|w_q(x)|^2\,dx
&=\int_{\mathbb{R}^2}|\tau_q^{-1}x+x_0|_b^2\|\varphi_q\|_2^{-2}|\varphi_q(x)|^2\,dx\\
&\to \|Q\|_2^{-2}\int_{\mathbb{R}^2}|x_0|_b^2|Q(x)|^2\,dx
\end{split}
\end{equation}
and
\begin{equation} \label{ee-16}
\int_{\mathbb{R}^2}|x|_b|w_q(x)|^2\,dx\to \|Q\|_2^{-2}\int_{\mathbb{R}^2}|x_0|_b|Q(x)|^2\,dx
\end{equation}
as $q\to 2_+$. Since $t_{q}>\tilde{t}_0= \sqrt{\frac{q-2}{12q}}$, we see from \eqref{ee-14}--\eqref{ee-16} that
\begin{equation} \label{ee-17}
(q-2)^{\frac{3}{2}}\tau_q^2|1-t_{q}^{q-2}|\leq C.
\end{equation}
Now, we claim that
\begin{equation} \label{ee-18}
|1-t_{q}|\leq \tau_q^{-\frac{3}{2}}.
\end{equation}
Indeed, if this is not the case, there would exist $q_n\to 2_+$ such that either
\begin{equation*}
t_{1q_n}\geq 1+\tau_{q_n}^{-\frac{3}{2}} \ \ {\rm or} \ \ t_{1q_n}\leq 1-\tau_{q_n}^{-\frac{3}{2}}.
\end{equation*}
If $t_{1q_n}\geq 1+\tau_{q_n}^{-\frac{3}{2}}$, then
\begin{equation*}
\begin{split}
(q_n-2)^{\frac{3}{2}}\tau_{q_n}^2|1-t_{q_n}^{q_n-2}|
&\geq (q_n-2)^{\frac{3}{2}}\tau_{q_n}^2\big[(1+\tau_{q_n}^{-\frac{3}{2}})^{q_n-2}-1\big]\\
&=(q_n-2)^{\frac{3}{2}}\tau_{q_n}^2\big[e^{(q_n-2)\log(1+\tau_{q_n}^{-\frac{3}{2}})}-1\big]\\
&\geq (q_n-2)^{\frac{3}{2}}\tau_{q_n}^2\frac{1}{2}(q_n-2)\tau_{q_n}^{-\frac{3}{2}}\to +\infty,
\end{split}
\end{equation*}
which contradicts \eqref{ee-17}. The case $t_{1q_n}\leq 1-\tau_{q_n}^{-\frac{3}{2}}$ can be ruled out in the same way.

Next, we deal with $$\int_{\mathbb{R}^2}V(x)|w_q^{t_{q}}(x)|^2\,dx.$$ It holds
\begin{equation}\label{ee-19aa}
\begin{split}
\int_{\mathbb{R}^2}V(x)|w_q^{t_{q}}(x)|^2\,dx
&=\int_{\mathbb{R}^2}
\big(|\frac{x}{t_{q}\tau_q}+\frac{x_0}{t_{q}}|_b-|x_0|_b\big)^2\frac{|\varphi_q(x)|^2}{\|\varphi_q\|_2^2}\,dx\\
&=\frac{1}{\tau_q^2}\int_{\mathbb{R}^2}\big(|\frac{x}{t_{q}}+\frac{x_0\tau_q}{t_{q}}|_b
-|\frac{x_0\tau_q}{t_{q}}|_b
+|\frac{x_0\tau_q}{t_{q}}|_b-|x_0\tau_q|_b\big)^2\frac{|\varphi_q(x)|^2}{\|\varphi_q\|_2^2}\,dx.
\end{split}
\end{equation}
By \eqref{ee-18} and $|x_0|_b=A$, we get
\begin{equation}\label{ee-19}
\begin{split}
\bigg|\frac{x}{t_{q}}+\frac{x_0\tau_q}{t_{q}}\bigg|_b-\bigg|\frac{x_0\tau_q}{t_{q}}\bigg|_b
&=\frac{1}{t_{q}}\Big(|x+x_0\tau_q|_b-|x_0\tau_q|_b\Big)\\
&=\frac{1}{t_{q}}\frac{{|x|_b^2}+2(b_1^{-2}x_1, b_2^{-2}x_2)\cdot x_0\tau_q}{|x+x_0\tau_q|_b+|x_0\tau_q|_b}\\
&\to \frac{(b_1^{-2}x_1, b_2^{-2}x_2)\cdot x_0}{A}
\end{split}
\end{equation}
and
\begin{equation}\label{ee-20}
\bigg|\big|\frac{x_0\tau_q}{t_{q}}\big|_b-\big|x_0\tau_q\big|\bigg|_b\leq |x_0\tau_q|_b\frac{|1-t_{q}|}{|t_{q}|}\leq C\tau_q^{-\frac{1}{2}}\to 0
\end{equation}
as $q\to 2$. Since $Q$ is radially symmetric about the origin, by Lemma 2.1, \eqref{ee-19}, \eqref{ee-20} and the Lebesgue dominated theorem, we have
\begin{equation} \label{ee-21}
\begin{split}
\int_{\mathbb{R}^2}V(x)|w_q^{t_{q}}(x)|^2\,dx
&=\tau_q^{-2}\Big(\int_{\mathbb{R}^2}\frac{|b_1^{-2}x_1x_0^1|^2}{A^2}
\frac{|Q(x)|^2}{\|Q\|_2^2}\,dx+o(1)\Big)\\
&=\tau_q^{-2}\Big(\frac{1}{2b_1^2}\int_{\mathbb{R}^2}
\frac{|x|^2|Q(x)|^2}{\|Q\|_2^2}\,dx+o(1)\Big).
\end{split}
\end{equation}
We derive from \eqref{ee-3}, \eqref{ee-4}, \eqref{ee-20} and \eqref{ee-21} that
\begin{equation*}
\begin{split}
\max_{\tilde{t}_0\leq t\leq \tilde{t}_1}E_{a,q}(w_q^t)&=E_{a,q}(w_q^{t_{q}})\\
&=\frac{1}{2}\tau_q^2t_{q}^2-\frac{1}{q}\tau_q^2t_{q}^q+\int_{\mathbb{R}^2}V(x)|w_q^{t_{q}}(x)|^2\,dx\\
&\leq \tau_q^2\max_{t>0}\{\frac{1}{2}t^2-\frac{1}{q}t^q\}+\int_{\mathbb{R}^2}V(x)|w_q^{t_{q}}(x)|^2\,dx\\
&\leq \frac{q-2}{2q}\tau_q^2+\tau_q^{-2}\Big(\frac{1}{2b_1^2}\int_{\mathbb{R}^2}
\frac{|x|^2|Q(x)|^2}{\|Q\|_2^2}\,dx+o(1)\Big),
\end{split}
\end{equation*}
which and  \eqref{ee-12a} immediately yield the conclusion.
\end{proof}

\bigskip

\begin{Proposition}\label{tg1}
There holds
\begin{equation} \label{tg1-1}
C_1(q-2)\tau_q^2\leq \int_{\mathbb{R}^2}|\nabla u_q(x)|^2\,dx\leq \tau_q^2+\frac{C_2}{q-2}
\end{equation}
with $C_1>0$, $C_2>0$.
\end{Proposition}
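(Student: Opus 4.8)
The plan is to extract both bounds from information already in hand: the energy identity \eqref{1.1-14a} for $u_q$, the Pohozaev-type constraint $Q_q(u_q)=0$ from \eqref{1.1-18}, and the two-sided energy estimate of Proposition \ref{ee}. The key observation is that the identity \eqref{1.1-14a}, evaluated at the critical point $u_q$ (where $Q_q(u_q)=0$), reads
\[
\frac{q-2}{2}\int_{\mathbb{R}^2}|\nabla u_q|^2\,dx+\frac12\int_{\mathbb{R}^2}\big[q(|x|_b-A)^2+2|x|_b(|x|_b-A)\big]|u_q|^2\,dx=qE_{a,q}(u_q).
\]
Since all three integrands on the left involving the potential can be controlled below by $-\tfrac12 A^2$ (as was already done in the proof of Theorem \ref{thm1}: $q(|x|_b-A)^2+2|x|_b(|x|_b-A)=q(|x|_b-A)^2+|x|_b^2-(|x|_b-A)^2-A^2\geq -A^2$ for $q$ close to $2$), we obtain the clean inequality
\[
\frac{q-2}{2}\int_{\mathbb{R}^2}|\nabla u_q|^2\,dx\leq qE_{a,q}(u_q)+\tfrac12 A^2.
\]
Plugging in the upper bound $E_{a,q}(u_q)\leq \tfrac{q-2}{2q}\tau_q^2+\tau_q^{-2}(C+o(1))$ from Proposition \ref{ee} gives
\[
\int_{\mathbb{R}^2}|\nabla u_q|^2\,dx\leq \tau_q^2+\frac{q}{q-2}\big(2\tau_q^{-2}(C+o(1))+A^2\big)\leq \tau_q^2+\frac{C_2}{q-2}
\]
for $q>2$ close to $2$, using that $\tau_q^{-2}$ is bounded (indeed $\tau_q\to\infty$) and $q$ is bounded. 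This establishes the upper bound.

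For the lower bound I would argue from the energy estimate in the other direction together with the Gagliardo-Nirenberg inequality. From $E_{a,q}(u_q)\geq \tfrac{q-2}{2q}\tau_q^2$ and the definition of $E_{a,q}$, dropping the nonnegative potential term,
\[
\frac{q-2}{2q}\tau_q^2\leq E_{a,q}(u_q)\leq \frac12\int_{\mathbb{R}^2}|\nabla u_q|^2\,dx+\frac12\int_{\mathbb{R}^2}V|u_q|^2\,dx.
\]
This alone does not isolate $\|\nabla u_q\|_2^2$; one must also bound $\int V|u_q|^2$. Here I would return once more to identity \eqref{1.1-14a}: rearranged, it also yields an upper bound for $\int V|u_q|^2\,dx$ in terms of $E_{a,q}(u_q)$ and $\int|\nabla u_q|^2\,dx$, namely $\int_{\mathbb{R}^2}(|x|_b-A)^2|u_q|^2\,dx\leq \tfrac{2}{q}qE_{a,q}(u_q)+A^2\leq \tfrac{q-2}{q}\tau_q^2\cdot O(1)+\tau_q^{-2}O(1)$. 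Combined with $\|u_q\|_2=1$ this controls $\int V|u_q|^2\,dx$ by a quantity of order $(q-2)\tau_q^2$, whence $\tfrac{q-2}{2q}\tau_q^2\leq \tfrac12\|\nabla u_q\|_2^2+(q-2)\tau_q^2\cdot o(1)$, giving $\|\nabla u_q\|_2^2\geq C_1(q-2)\tau_q^2$ for $q$ close to $2$.

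I expect the main obstacle to be the lower bound: the crude splitting "$E_{a,q}\ge \tfrac12\|\nabla u_q\|_2^2$ minus something" is not available because the nonlinear term is being absorbed via Gagliardo-Nirenberg, and one has to be careful that the contribution of $\int V|u_q|^2$ does not swamp the leading term $\tfrac{q-2}{2q}\tau_q^2$. The resolution, as sketched above, is that \eqref{1.1-14a} simultaneously bounds $\int V|u_q|^2$ from above by something of the same order $(q-2)\tau_q^2$ but with a constant that can be made strictly smaller than $\tfrac1q$ after using Proposition \ref{ee} more sharply (the potential contribution is in fact only $O(\tau_q^{-2})$ plus a term absorbing into the $o(1)$), so the two-sided squeeze closes. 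An alternative, cleaner route to the lower bound would be to invoke the variational characterization \eqref{ee-0a}: since $u_q\in S(1)$ and, by the energy lower bound, $u_q$ cannot have $\|\nabla u_q\|_2^2$ too small without violating $E_{a,q}(u_q)\ge\tfrac{q-2}{2q}\tau_q^2$, a direct comparison of $E_{a,q}|_{V=0}$ along the scaling $t\mapsto \|\nabla u_q\|_2 = t\tau_q$ pins the ratio $\|\nabla u_q\|_2^2/\tau_q^2$ away from $0$; I would present whichever of these two is shorter.
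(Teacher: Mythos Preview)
Your upper-bound argument is correct and matches the paper: combine identity \eqref{1.1-14a} at $u_q$ (where $Q_q(u_q)=0$) with the pointwise lower bound $q(|x|_b-A)^2+2|x|_b(|x|_b-A)\geq -A^2$ and the upper energy estimate of Proposition \ref{ee}.

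The lower bound, however, has a genuine gap. From \eqref{1.1-14a} (dropping the nonnegative gradient term and using $|x|_b(|x|_b-A)\geq -A^2/4$) one obtains only
\[
\frac{q}{2}\int_{\mathbb{R}^2}V|u_q|^2\,dx\ \leq\ qE_{a,q}(u_q)+\tfrac{A^2}{4}\ \leq\ \tfrac{q-2}{2}\tau_q^2+O(1),
\]
so $\int V|u_q|^2\leq \tfrac{q-2}{q}\tau_q^2+O(1)$ with leading constant \emph{exactly} $1/q$. Inserting this into $E_{a,q}(u_q)\leq \tfrac12\|\nabla u_q\|_2^2+\tfrac12\int V|u_q|^2$ and comparing with the lower energy bound $E_{a,q}(u_q)\geq \tfrac{q-2}{2q}\tau_q^2$, the two terms of order $(q-2)\tau_q^2$ cancel identically, leaving only $\|\nabla u_q\|_2^2\geq -O(1)$. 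Your claim that the constant ``can be made strictly smaller than $1/q$'' or that the potential term is ``in fact only $O(\tau_q^{-2})$'' is not available here: that sharper information is proved only later (see \eqref{1.2-16}) and relies on Proposition \ref{tg1} as input, so invoking it would be circular. The scaling alternative you sketch is too vague to repair this; any attempt to control $E_{a,q}|_{V=0}(u_q)$ from below again runs into the unknown size of $\int V|u_q|^2$.

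The missing ingredient is a second use of the Pohozaev constraint \eqref{1.1-18}, read as a \emph{lower} bound for the gradient:
\[
\int_{\mathbb{R}^2}|\nabla u_q|^2\,dx\ \geq\ \int_{\mathbb{R}^2}|x|_b(|x|_b-A)|u_q|^2\,dx.
\]
The paper argues by contradiction. If $\int|\nabla u_{q_k}|^2=o\big((q_k-2)\tau_{q_k}^2\big)$ along some $q_k\to 2_+$, then \eqref{tg1-2} forces $\int\big[q_k(|x|_b-A)^2+2|x|_b(|x|_b-A)\big]|u_{q_k}|^2\sim(q_k-2)\tau_{q_k}^2$. Since this integrand is bounded above by $C(|x|_b^2+1)$, one deduces $\int|x|_b^2|u_{q_k}|^2\gtrsim(q_k-2)\tau_{q_k}^2$, and hence $\int|x|_b(|x|_b-A)|u_{q_k}|^2\geq \tfrac12\int|x|_b^2|u_{q_k}|^2-\tfrac12 A^2\gtrsim(q_k-2)\tau_{q_k}^2$. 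Feeding this back into \eqref{1.1-18} yields $\int|\nabla u_{q_k}|^2\gtrsim(q_k-2)\tau_{q_k}^2$, contradicting the assumption. This feedback through \eqref{1.1-18} is precisely what your route lacks.
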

\begin{proof}
We deduce from Proposition \ref{ee}, \eqref{1.1-14a} and  \eqref{1.1-18} that
\begin{equation} \label{tg1-2}
(q-2)\int_{\mathbb{R}^2}|\nabla u_q(x)|^2\,dx+\int_{\mathbb{R}^2}[q(|x|_b-A)^2+2|x|_b(|x|_b-A)]|u_q(x)|^2\,dx
=(q-2)\tau_q^2+o(1),
\end{equation}
which implies
\[
\int_{\mathbb{R}^2}|\nabla u_q(x)|^2\,dx\leq \tau_q^2+\frac{C}{q-2}
\]
since $\|u_q\|_2^2=1$.

Now, we derive the lower bound in \eqref{tg1-1}.

Suppose on the contrary that there exists $q_k\to 2_+$ such that
\[
\int_{\mathbb{R}^2}|\nabla u_{q_k}(x)|^2\,dx=o((q_k-2)\tau_{q_k}^2) \ \ {\rm as} \ \ k\to \infty.
\]
By \eqref{tg1-2}, we have
\[
\int_{\mathbb{R}^2}[q_k(|x|_b-A)^2+|x|_b(|x|_b-A)]|u_{q_k}(x)|^2\,dx=O((q_k-2)\tau_{q_k}^2).
\]
Observing that
$$
q(|x|_b-A)^2+|x|_b(|x|_b-A)\leq (q+1)|x|_b^2+qA^2
$$ and $\|u_q\|_2^2=1$, we have
\[
\int_{\mathbb{R}^2}|x|_b^2|u_{q_k}(x)|^2\,dx\geq O((q_k-2)\tau_{q_k}^2).
\]
Therefore,
\begin{equation*}
\begin{split}
\int_{\mathbb{R}^2}|x|_b(|x|_b-A)|u_{q_k}(x)|^2\,dx&\geq \int_{\mathbb{R}^2}\big(\frac{1}{2}|x|_b^2-\frac{1}{2}A^2\big)|u_{q_k}(x)|^2\,dx\\
&\geq O((q_k-2)\tau_{q_k}^2).
\end{split}
\end{equation*}
By \eqref{1.1-18} we obtain
\begin{equation*}
\int_{\mathbb{R}^2}|\nabla u_{q_k}(x)|^2\geq \int_{\mathbb{R}^2}|x|_b(|x|_b-A)|u_{q_k}(x)|^2\,dx\geq O((q_k-2)\tau_{q_k}^2),
\end{equation*}
a contradiction.
\end{proof}

\bigskip

\bigskip

{\bf Proof of Theorem 1.2.}  It is known from  \eqref{tg1-2} that
\[
\int_{\mathbb{R}^2}[q(|x|_b-A)^2+|x|_b(|x|_b-A)]|u_q(x)|^2\,dx\leq C(q-2)\tau_q^2.
\]
Then, the fact that
$$
|x|_b(|x|_b-A)\geq \frac{1}{2}(|x|_b^2-A^2)
$$
and $\|u_q\|_2^2=1$ yield
\[
\int_{\mathbb{R}^2}|x|_b^2|u_q(x)|^2\,dx\leq C(q-2)\tau_q^2.
\]
Therefore,
\begin{equation}\label{1.2-1}
\int_{\mathbb{R}^2}(|x|_b-A)^2|u_q(x)|^2\,dx\leq 2\int_{\mathbb{R}^2}(|x|_b^2+A^2)|u_q(x)|^2\,dx\leq C(q-2)\tau_q^2
\end{equation}
and
\begin{equation}\label{1.2-2}
\Big|\int_{\mathbb{R}^2}|x|_b(|x|_b-A)|u_q(x)|^2\,dx\Big|\leq \int_{\mathbb{R}^2}\frac{1}{2}(3|x|_b^2+A^2)|u_q(x)|^2\,dx
\leq C(q-2)\tau_q^2.
\end{equation}
Since $-\Delta+V$ is a compact operator,  its first eigenvalue $\lambda_1$ and corresponding eigenfunction $\varphi_1$ are positive, which satisfy
\[
-\Delta \varphi+V\varphi=\lambda_1\varphi.
\]
Hence, by \eqref{1.1-17},
\begin{equation*}
\begin{split}
\int_{\mathbb{R}^2}\mu_qu_q\varphi_1\,dx
&< \int_{\mathbb{R}^2}(-\Delta u_q+Vu_q)\varphi_1\,dx\\
&=\int_{\mathbb{R}^2}(-\Delta\varphi_1+V\varphi_1)u_q\,dx\\
&=\int_{\mathbb{R}^2}\lambda_1\varphi_1u_q\,dx
\end{split}
\end{equation*}
implying
\begin{equation}\label{1.2-3}
\mu_q< \lambda_1.
\end{equation}
Using \eqref{1.1-17} again, we get
\begin{equation}\label{1.2-4}
\mu_q=\int_{\mathbb{R}^2}|\nabla u_q(x)|^2\,dx
+\int_{\mathbb{R}^2}V(x)|u_q(x)|^2\,dx-a\int_{\mathbb{R}^2}|u_q(x)|^{q+2}\,dx.
\end{equation}
Equation \eqref{1.2-3} and Proposition \ref{tg1} yield for $q>2$ and close to $2$ that
\begin{equation}\label{1.2-5}
a\int_{\mathbb{R}^2}|u_q(x)|^{q+2}\,dx\geq \int_{\mathbb{R}^2}|\nabla u_q(x)|^2\,dx-\lambda_1
\geq \frac{1}{2}\int_{\mathbb{R}^2}|\nabla u_q(x)|^2\,dx.
\end{equation}
By \eqref{1.1-18}, Proposition \ref{tg1} and \eqref{1.2-2},
\begin{equation}\label{1.2-6}
\begin{split}
\frac{qa}{q+2}\int_{\mathbb{R}^2}|u_q(x)|^{q+2}\,dx
&\leq \int_{\mathbb{R}^2}|\nabla u_q(x)|^2\,dx+\Big|\int_{\mathbb{R}^2}|x|_b(|x|_b-A)|u_q(x)|^2\,dx\Big|\\
&\leq C\int_{\mathbb{R}^2}|\nabla u_q(x)|^2\,dx.
\end{split}
\end{equation}

Let $\varepsilon_q=\|\nabla u_q\|_2^{-1}$ and $\tilde{w}_q(x)=\varepsilon_qu_q(\varepsilon_q x)$. Then,
$\|\nabla \tilde{w}_q\|_2^2=\|\tilde{w}_q\|_2^2=1$.
We know from  Lemma \ref{lem:5.1} that $a_q^*\to a^*$ as $q\to 2_+$, and then
\begin{equation*}
\begin{split}
\varepsilon_q^{q-2}
&=\Big(\int_{\mathbb{R}^2}|\nabla u_q(x)|^2\,dx\Big)^{\frac{2-q}{2}}\\
&\geq C^{\frac{2-q}{2}}\tau_q^{2-q}\\
&=C^{\frac{2-q}{2}}\frac{qa}{2a_q^*}\to \frac{a}{a^*}
\end{split}
\end{equation*}
and
\[
\varepsilon_q^{q-2}\leq C^{\frac{2-q}{2}}(q-2)^{\frac{2-q}{2}}\tau_q^{2-q}\to \frac{a}{a^*}.
\]
Thus
\begin{equation}\label{1.2-7}
\varepsilon_q^{q-2}\to \frac{a}{a^*}\ \ {\rm as} \ \ q\to 2_+.
\end{equation}
It follows from \eqref{1.2-5}--\eqref{1.2-7} that there exist $C_1>0$ and $C_2>0$ such that
\begin{equation}\label{1.2-8}
C_1\leq \int_{\mathbb{R}^2}|\tilde{w}_q(x)|^{q+2}\,dx
=\varepsilon_q^{q-2}\frac{\int_{\mathbb{R}^2}|u_q(x)|^{q+2}\,dx}{\int_{\mathbb{R}^2}|\nabla u_q(x)|^2\,dx}
\leq C_2.
\end{equation}

Now we claim that there exist $\{y_q\}\subset \mathbb{R}^2$, $R_0>0$ and $\eta>0$ such that
\begin{equation}\label{1.2-9}
\liminf_{q\to 2_+}\int_{B_{R_0}(y_q)}|\tilde{w}_q(x)|^2\,dx\geq \eta.
\end{equation}
Indeed, if this is not the case, for any $R>0$, there exists a sequence  $\{\tilde{w}_{q_k}\}$ with $q_k\to 2_+$ as $k\to \infty$ such that
\[
\lim_{k\to\infty}\sup_{x\in\mathbb{R}^2}\int_{B_{R}(y)}|\tilde{w}_q(x)|^2\,dx=0.
\]
By Lemma 1.21 in \cite{W1}, we get $\tilde{w}_{q_k}\to 0$ strongly in $L^\gamma (\mathbb{R}^2)$ for any $\gamma>2$, which contradicts to \eqref{1.2-8}.

Denote $w_q(x)=\tilde{w}_q(x+y_q)$. Then
\begin{equation}\label{1.2-10a}
\|\nabla w_q\|_2^2=1;
\end{equation}
\begin{equation}\label{1.2-10b}
C_1\leq \int_{\mathbb{R}^2}|w_q(x)|^{q+2}\,dx\leq C_2;
\end{equation}
\begin{equation}\label{1.2-10c}
\liminf_{q\to 2_+}\int_{B_{R_0}(0)}|w_q(x)|^2\,dx\geq \eta.
\end{equation}
Hence, there exist a sequence $\{q_k\}$, $q_k\to 2_+$ as $k\to \infty$, and $w\in H^1(\mathbb{R}^2)$ such that
$w_{q_k}:=w_k\rightharpoonup w$ weakly in $\mathcal{H}$ and $w_k\to w\neq0$
strongly in $L^\gamma_{loc}(\mathbb{R}^2)$ for any $\gamma>2$.
Denote $\varepsilon_{q_k}:=\varepsilon_k$ and $y_{q_k}=y_k$ for simiplity. By \eqref{1.1-17}, we find that $w_k$ solves
\begin{equation}\label{1.2-10}
-\Delta w_k+\varepsilon_k^2(|\varepsilon_k(x+y_k)|_b-A)^2w_k
=\varepsilon_k^2\mu_{q_k}w_k+\varepsilon_k^{2-q_k}aw_k^{q_k+1}.
\end{equation}

Now, we study the asymptotic behavior of $w_k$. By \eqref{1.2-3}, the definition of $\varepsilon_k$ and Proposition \ref{tg1}, we have
\[
\varepsilon_k^2\mu_{q_k}\leq \varepsilon_k^2\lambda_1\to 0\ \ {\rm as} \ \ k\to \infty.
\]
It follows from \eqref{1.2-4} and \eqref{1.2-5} that
\begin{equation*}
\varepsilon_k^2\mu_{q_k}
\geq -\varepsilon_k^2a\int_{\mathbb{R}^2}|u_{q_k}(x)|^{q_k+2}\,dx
=- \frac{a\int_{\mathbb{R}^2}|u_{q_k}(x)|^{q_k+2}\,dx}{\int_{\mathbb{R}^2}|\nabla u_{q_k}(x)|^{2}\,dx}
\geq -C.
\end{equation*}
So we may assume
\begin{equation}\label{1.2-11}
\varepsilon_k^2\mu_{q_k}\to -\beta^2\leq 0.
\end{equation}

Next, we study the asymptotic behavior of $\varepsilon_k^2(|\varepsilon_k(x+y_k)|_b-A)^2$.

If $\liminf_{k\to\infty}|\varepsilon_ky_k|_b\leq C$, there exists a subsequence of $\{q_k\}$, still denoted by $\{q_k\}$, such that $|\varepsilon_ky_k|_b\leq C$, so for any $M>0$, we have
\begin{equation}\label{1.2-12}
\varepsilon_k^2(|\varepsilon_k(x+y_k)|_b-A)^2\to 0
\end{equation}
uniformly for $x\in B_M(0)$ as $k\to \infty$.

 If $\liminf_{k\to\infty}|\varepsilon_ky_k|_b=\infty$, there exists a subsequence of $\{q_k\}$, still denoted by $\{q_k\}$, such that $|\varepsilon_ky_k|_b\to\infty$, as $k\to \infty$. For any $M>0$, if $|x|_b\leq M$ and $k$ is large enough, we obtain
 \[
 (|\varepsilon_k(x+y_k)|_b-A)^2\geq \frac{1}{2}|\varepsilon_ky_k|_b^2.
 \]
Equation \eqref{1.2-1} then implies that
 \begin{equation*}
\begin{split}
\int_{B_{R_0}(0)}\frac{1}{2}|\varepsilon_ky_k|_b^2|w_k(x)|^2\,dx
&\leq \int_{\mathbb{R}^2}(|\varepsilon_k(x+y_k)|_b-A)^2|w_k(x)|^2\,dx\\
&=\int_{\mathbb{R}^2}(|x|_b-A)^2|u_{q_k}(x)|^2\,dx\\
&\leq C(q_k-2)\tau_{q_k}^2.
\end{split}
\end{equation*}
Consequently, by \eqref{1.2-10a}, we have
\[
|\varepsilon_ky_k|_b^2\leq C(q_k-2)\tau_{q_k}^2.
\]
Let $\varepsilon_q=\|\nabla u_q\|_2^{-1}$. By Proposition \ref{tg1},
\[
\varepsilon_k^2|\varepsilon_ky_k|_b^2\leq \frac{C(q_k-2)\tau_{q_k}^2}{\int_{\mathbb{R}^2}|\nabla u_{q_k}(x)|^2\,dx}\leq C.
\]
We may assume that $\varepsilon_k^2|\varepsilon_ky_k|_b^2\to\mu\geq0$. It is easy to show that for any $M>0$ and $x\in B_M(0)$,
\begin{equation}\label{1.2-13}
\lim_{k\to\infty}\varepsilon_k^2(|\varepsilon_k(x+y_k)|_b-A)^2
=\lim_{k\to\infty}\varepsilon_k^2|\varepsilon_ky_k|_b^2=\mu\geq0.
\end{equation}

In summary of \eqref{1.2-7} and \eqref{1.2-11}--\eqref{1.2-13}, we have the following cases.

Case (i): $\varepsilon_k\mu_{q_k}\to 0$,\ \ and $\varepsilon_k^2(|\varepsilon_k(x+y_k)|_b-A)^2\to0$;

Case (ii): $\varepsilon_k\mu_{q_k}\to -\beta^2<0$,\ \ and $\varepsilon_k^2(|\varepsilon_k(x+y_k)|_b-A)^2\to0$;

Case (iii): $\varepsilon_k\mu_{q_k}\to 0$,\ \ and $\varepsilon_k^2(|\varepsilon_k(x+y_k)|_b-A)^2\to\mu>0$;

Case (iv): $\varepsilon_k\mu_{q_k}\to  -\beta^2<0$,\ \ and $\varepsilon_k^2(|\varepsilon_k(x+y_k)|_b-A)^2\to\mu>0$;

Taking the limit as $k\to \infty$ in \eqref{1.2-10}, we obtain that $w$ satisfies correspondingly in the case (i) that
\begin{equation}\label{1.2-13}
-\Delta w=a^*w^3;
\end{equation}
in the case (ii) that
\begin{equation}\label{1.2-14}
-\Delta w+\beta^2w=a^*w^3;
\end{equation}
in the case (iii) that
\begin{equation*}
-\Delta w+\mu w=a^*w^3;
\end{equation*}
in the case (iv) that
\begin{equation*}
-\Delta w+(\mu+\beta^2)w=a^*w^3.
\end{equation*}

 The case (i) can not happen. Indeed, if it would happen on the contrary, the fact that $w\geq0$ and $w\neq0$ would imply that the equation \eqref{1.2-13} admits a positive solution, which contradicts the Liouville theorem.

 In the case (ii), by the uniqueness of positive solution of \eqref{1.2-14} and $w\neq0$, there exists $y_0\in\mathbb{R}^2$ such that
 \[
 w=\frac{\beta}{\|Q\|_2}Q(\beta(x-y_0)),
 \]
 which implies $\|w\|_2^2=1$. Hence $w_{q_k}\to w$ strongly in $L^2(\mathbb{R}^2)$, that is
 \begin{equation}\label{1.2-15a}
\varepsilon_k u_{q_k}(\varepsilon_k(x+y_k))\to \frac{\beta}{\|Q\|_2}Q(\beta(x-y_0))\ \ {\rm strongly\ \ in} \ \ L^2(\mathbb{R}^2).
\end{equation}
Let $x_k=\varepsilon_k(y_k+y_0)$. Then
\begin{equation}\label{1.2-15}
\bar{w}_k:=\varepsilon_k u_{q_k}(\varepsilon_kx+x_k)\to \frac{\beta}{\|Q\|_2}Q(\beta x) \ \ {\rm strongly \ \ in} \ \ L^2(\mathbb{R}^2).
\end{equation}
The case (iii) and (iv) can be treated in the same way as the case (ii), we omit the detail. The proof is complete.
\qquad $\Box$

\bigskip

Finally, we study the limiting behavior of $x_k$.

{\bf Proof of Theorem \ref{thm4}}
It is obvious that  either $\liminf_{k\to \infty}|x_k|=+\infty$ or $\liminf_{k\to \infty}|x_k|$ is bounded.

If $\liminf_{k\to \infty}|x_k|=+\infty$, there is nothing to prove.

If $\liminf_{k\to \infty}|x_k|$ is bounded, there exist $x_0\in \mathbb{R}^2$ and a subsequence of $\{x_k\}$, still denoted by $\{x_k\}$ such that $x_k\to x_0$, as $k\to \infty$. We may show as the proof of Theorem \ref{thm3} that only the case (ii) may happen, that is,
$\varepsilon_k^2\mu_{q_k}\to -\beta^2<0$ and $\varepsilon_k^{2-q_k}a\to a^*$. By \eqref{1.2-10}, we have
\[
-\Delta w_{q_k}\leq \varepsilon_k^{2-q_k}a w_k^{q_k+1}.
\]
Using the De Diorgi-Nash-Moser estimate, see Theorem 4.1 in \cite{HL}, we obtain
\begin{equation*}
\begin{split}
\max_{x\in B_1(\xi)}w_k(x)
&\leq C\Big(\int_{B_2(\xi)}|w_k(x)|^2\,dx\Big)^{\frac{1}{2}}\\
&\leq C\Big[(\int_{B_2(\xi)}|w_k(x)-w|^2\,dx\Big)^{\frac{1}{2}}+\Big(\int_{B_2(\xi)}|w(x)|^2\,dx\Big)^{\frac{1}{2}}\Big].
\end{split}
\end{equation*}
Since $w_{q_k}\to w$ strongly in $L^2(\mathbb{R}^2)$, $w_k$ is uniformly bounded in $L^\infty(\mathbb{R}^2)$. Hence, there exists $R>0$ such that if $k$ is large enough and $|x|\geq R$,
\[
\varepsilon_k^{2-q_k}aw_k^{q_k}\leq \frac{\beta^2}{2}
\]
 and
 \[
 -\Delta w_k\leq -\frac{1}{3}\beta^2w_k.
 \]
 By the comparison principle,
 \[
  w_k\leq Ce^{-\frac{1}{4}\beta^2|x|}\ \ {\rm for} \ \ |x| \geq R.
 \]
 Since $w_k$ is uniformly bounded in $L^\infty(\mathbb{R}^2)$, we have
 \[
  w_k\leq Ce^{-\frac{1}{4}\beta^2|x|}\ \ {\rm for \ \ any \ \ }x\in \mathbb{R}^2.
 \]
Noting $x_k=\varepsilon_k(y_k+y_0)$ is bounded in $\mathbb{R}^2$, we have
 \begin{equation}\label{1.2-16}
\int_{\mathbb{R}^2}(|x|_b-A)^2|u_{q_k}(x)|^2\,dx=\int_{\mathbb{R}^2}(|\varepsilon_k(x+y_k)|_b-A)^2|w_k(x)|^2\,dx\leq C.
\end{equation}
Similarly,
\begin{equation}\label{1.2-17}
\Big|\int_{\mathbb{R}^2}|x|_b(|x|_b-A)|u_{q_k}(x)|^2\,dx\Big|\leq C\ \ \ {\rm and} \ \ \int_{\mathbb{R}^2}|x|_b^2|u_{q_k}(x)|^2\,dx\leq C.
\end{equation}
By \eqref{tg1-2}, \eqref{1.2-16} and \eqref{1.2-17}, we obtain
\begin{equation}\label{1.2-19}
\lim_{k\to\infty}\tau_k^{-2}\int_{\mathbb{R}^2}|\nabla u_{q_k}|^2\,dx=1.
\end{equation}
Let
\begin{equation*}
\begin{split}
g(t)
&=:E_{a,q_k}(u_{q_k}^t)\\
&=\frac{1}{2}t^2\int_{\mathbb{R}^2}|\nabla u_{q_k}|^2\,dx-\frac{at^{q_k}}{q_k+2}\int_{\mathbb{R}^2}|u_{q_k}(x)|^{q_k+2}\,dx\\
&+\frac{1}{2}t^{-2}\int_{\mathbb{R}^2}|x|_b^2|u_{q_k}(x)|^2\,dx-At^{-1}\int_{\mathbb{R}^2}|x|_b|u_{q_k}(x)|^2\,dx
+\frac{1}{2}A^2\int_{\mathbb{R}^2}|u_{q_k}(x)|^2\,dx\\
&:=\frac{1}{2}Bt^2-\frac{1}{q}Ct^{q_k}+\frac{1}{2}Dt^{-2}-Et^{-1}+F.
\end{split}
\end{equation*}
Then
\begin{equation*}
\begin{split}
g'(t)&=Bt-Ct^{q_k-1}-Dt^{-3}+Et^{-2}\\
&=t^{-3}(Bt^{4}-Ct^{q_k+2}-D+Et)\\
&:=t^{-3}h(t),
\end{split}
\end{equation*}
\begin{equation*}
h'(t)=4Bt^3-C(q_k+2)t^{q_k+1}+E,
\end{equation*}
and
\begin{equation*}
h''(t)=12Bt^2-C(q_k+2)(q_k+1)t^{q_k}.
\end{equation*}
It is easy to deduce that there exist $t_1>0$ and $t_2>0$ such that \\
$(i)$ $g(t)$ is decreasing in $(0,t_1)$ and $(t_2,+\infty)$, while $g(t)$ is increasing in $(t_1,t_2)$;\\
$(ii)$ $g'(t_1)=g'(t_2)=0$ and $\lim_{t\to0}g(t)=\infty$.

In order to estimate $t_1$ and $t_2$, we rewrite $g'(t)$ as
\begin{equation*}
\begin{split}
g'(t)
&=t\int_{\mathbb{R}^2}|\nabla u_{q_k}|^2\,dx-\frac{q_ka}{q_k+2}t^{q_k-1}\int_{\mathbb{R}^2}|u_{q_k}(x)|^{q_k+2}\,dx\\
&-\frac{1}{t^3}\int_{\mathbb{R}^2}|x|_b^2|u_{q_k}(x)|^2\,dx+At^{-2}\int_{\mathbb{R}^2}|x|_b|u_{q_k}(x)|^2\,dx.
\end{split}
\end{equation*}
By the Pohozaev identity \eqref{1.1-18}, we have $g'(1)=0$.

Now, we claim that $t_2=1$. Indeed, we choose $t_3=C(q_k-2)^{\frac{1}{2}}$ such that
\[
\frac{1}{2}\int_{\mathbb{R}^2}\Big|\nabla u_{q_k}^{t_3}(x)|^2\,dx\leq \frac{q_k-2}{4q_k}\tau_k^2.
\]
Then by \eqref{1.2-17}, we have
\begin{equation*}
\begin{split}
\frac{1}{2}\int_{\mathbb{R}^2}V(x)|u_{q_k}^{t_3}(x)|^2\,dx
&=\frac{1}{2}\int_{\mathbb{R}^2}(t_3^{-1}|x|_b-A)^2|u_{q_k}(x)|^2\,dx\\
&\leq \int_{\mathbb{R}^2}(t_3^{-2}|x|_b^2+A^2)|u_{q_k}(x)|^2\,dx\\
&\leq C(q_k-2)^{-2}.
\end{split}
\end{equation*}
By Proposition \ref{tg1}, for $q>2$ and $q$ close to $2$ we have,
\begin{equation*}
\begin{split}
g(t_3)&\leq \frac{1}{2}\Big(\int_{\mathbb{R}^2}\Big|\nabla u_{q_k}^{t_3}(x)\Big|^2\,dx
+\int_{\mathbb{R}^2}V(x)\Big| u_{q_k}^{t_3}(x)\Big|^2\,dx\Big)\\
&\leq \frac{q_k-2}{4q_k}\tau_k^2+C(q_k-2)^{-2}\\
&\leq \frac{q_k-2}{2q_k}\tau_k^2+o(1)\\
&=E_{a,q_k}(u_{q_k})=g(1).
\end{split}
\end{equation*}
Hence,  $t_2=1$ and
\begin{equation}\label{1.2-20}
g(1)=E_{a,q_k}(u_{q_k})\geq g(t)=E_{a,q_k}(u_{q_k}^t)\ \ {\rm for \ \ any} \ \ t\in (C(q_k-2)^{\frac{1}{2}}, +\infty).
\end{equation}
Choose $t_k$ such that
\[
\tilde{Q}_{q_k}(u_{q_k}^{t_k})=t_k^2\int_{\mathbb{R}^2}|\nabla u_{q_k}(x)|^2\,dx-\frac{q_ka}{q_k+2}t_k^{q_k}
\int_{\mathbb{R}^2}| u_{q_k}(x)|^{q_k+2}\,dx=0.
\]
By \eqref{1.1-18}, \eqref{1.2-17} and \eqref{1.2-19}, we obtain
\begin{equation*}
\begin{split}
t_k&=\left[\frac{(q_k+2)\int_{\mathbb{R}^2}|\nabla u_{q_k}(x)|^2\,dx}{q_ka\int_{\mathbb{R}^2}| u_{q_k}(x)|^{q_k+2}\,dx}\right]^{\frac{1}{q_k-2}}\\
&=\left[\frac{\int_{\mathbb{R}^2}|\nabla u_{q_k}(x)|^2\,dx}{\int_{\mathbb{R}^2}|\nabla u_{q_k}(x)|^2\,dx-\int_{\mathbb{R}^2}|x|_b(|x|_b-A)| u_{q_k}(x)|^2\,dx}\right]^{\frac{1}{q_k-2}}\\
&=\big[1+O(\tau_k^{-2})\big]^{\frac{1}{q_k-2}}.
\end{split}
\end{equation*}
Therefore,
\begin{equation}\label{1.2-22}
\big|t_k-1\big|=\Big|e^{\frac{1}{q_k-2}\log (1+O(\tau_k^{-2}))}-1\Big|=\frac{1}{q_k-2}O(\tau_k^{-2})\ \ {\rm as} \ \ k\to\infty
\end{equation}
and $t_k>t_3$ for $k$ large enough.

 By Lemma \ref{lem:3.1} and Remark \ref{r1}, we get
\begin{equation}\label{1.2-21}
\inf_{g\in \tilde{\Gamma}_q}\max_{t\in[0,1]}E_{a,q}|_{V=0}(g(t))=\inf\{E_{a,q}|_{V=0}(u):\tilde{Q}_q(u)=0, u\in \tilde{S}(1) \}=\frac{q-2}{2q}\tau_q^2.
\end{equation}
We derive from \eqref{1.2-20} and \eqref{1.2-21} that
\begin{equation*}
\begin{split}
g(1)&=E_{a,q_k}(u_{q_k})\\
&\geq g(t_k)\\
&=E_{a,q_k}(u_{q_k}^{t_k})\\
&=E_{a,q_k}|_{V=0}(u_{q_k}^{t_k})+\int_{\mathbb{R}^2}V(x)\big|u_{q_k}^{t_k}(x)\big|^2\,dx\\
&\geq \inf\{E_{a,q}|_{V=0}(u):\tilde{Q}_q(u)=0, u\in \tilde{S}(1) \}+\int_{\mathbb{R}^2}V(x)\big|u_{q_k}^{t_k}(x)\big|^2\,dx\\
&=\frac{q_k-2}{2q_k}\tau_k^2+\int_{\mathbb{R}^2}V(x)\big|u_{q_k}^{t_k}(x)\big|^2\,dx.
\end{split}
\end{equation*}
Then, Proposition \ref{ee} yields
\begin{equation}\label{1.2-22b}
\limsup_{k\to\infty}\tau_k^2\int_{\mathbb{R}^2}V(x)\big|u_{q_k}^{t_k}(x)\big|^2\,dx\leq
\frac{1}{2\|Q\|_2^2}\int_{\mathbb{R}^2}|x|^2|Q(x)|^2\,dx.
\end{equation}
Next we claim that
\begin{equation}\label{1.2-23}
\liminf_{k\to\infty}\Big|\frac{|x_k|_b-At_k}{\varepsilon_kt_k}\Big|\leq C.
\end{equation}
We argue by contradiction. Suppose this is not the case,  there would exist a subsequence of $\{q_k\}$, still denoted by $\{q_k\}$, such that \[
\frac{||x_k|_b-At_{k}|}{\varepsilon_kt_k}\to \infty.
\]
By the definition $\varepsilon_k$, \eqref{1.2-10c} and \eqref{1.2-19}, for any $C\geq 0$ and $k$ large enough,
\begin{equation*}
\begin{split}
\tau_k^2\int_{\mathbb{R}^2}V(x)\big|u_{q_k}^{t_k}(x)\big|^2\,dx
&\geq \frac{1}{2}\varepsilon_k^{-2}\int_{\mathbb{R}^2}V(x)\big|u_{q_k}^{t_k}(x)\big|^2\,dx\\
&=\frac{1}{2}\varepsilon_k^{-2}\int_{\mathbb{R}^2}\Big(\frac{|\varepsilon_ky+x_k|}{t_k}-A\Big)^2|\bar{w}_k(y)|^2\,dy\\
&\geq \frac{1}{2}\int_{B_{R_0}(y_0)}\Big(\big|\frac{y}{t_k}+\frac{x_k}{\varepsilon_kt_k}\big|-\frac{A}{\varepsilon_k}\Big)^2
|\bar{w}_k(y)|^2\,dy\\
&\geq C,
\end{split}
\end{equation*}
which contradicts \eqref{1.2-22b}.
Hence, we may assume that
\begin{equation}\label{4.57a}
\frac{|x_k|_b-At_k}{\varepsilon_kt_k}\to C_0\ \ {\rm and} \ \ x_k\to x_0\ \ {\rm with} \ \ |x_0|_b=A.
\end{equation}
By the Fatou Lemma, \eqref{1.2-15} and \eqref{1.2-19}, we have
\begin{equation*}
\begin{split}
&\liminf_{k\to \infty}\tau_k^2\int_{\mathbb{R}^2}V(x)\big|u_{q_k}^{t_k}(x)\big|^2\,dx\\
&=\liminf_{k\to \infty}\varepsilon_k^{-2}\int_{\mathbb{R}^2}V(x)\big|u_{q_k}^{t_k}(x)\big|^2\,dx\\
&=\liminf_{k\to \infty}
\int_{\mathbb{R}^2}\Big[\Big(\big|\frac{y}{t_k}+\frac{x_k}{\varepsilon_kt_k}\big|_b
-\frac{|x_k|_b}{\varepsilon_kt_k}\Big)+\Big(\frac{|x_k|_b}{\varepsilon_kt_k}-\frac{A}{\varepsilon_k}\Big)\Big]^2
|\bar{w}_k(y)|^2\,dy\\
&=\liminf_{k\to \infty}\int_{\mathbb{R}^2}
\Big[\frac{1}{\varepsilon_kt_k(|\varepsilon_ky+x_k|_b+|x_k|_b)}
\big[\frac{\varepsilon_k^2|y_1|^2}{b_1^2}+\frac{\varepsilon_k^2|y_2|^2}{b_2^2}
+\frac{2\varepsilon_kx_k^1y_1}{b_1^2}+\frac{2\varepsilon_kx_k^2y_2}{b_2^2}
\big]\\
&~~+\frac{|x_k|_b-At_k}{\varepsilon_kt_k}\Big]^2|\bar{w}_k(y)|^2\,dy\\
&\geq \int_{\mathbb{R}^2}\Big(\frac{1}{|A|}\big(\frac{x_0^1y_1}{b_1^2}+\frac{x_0^2y_2}{b_2^2}\big)+C_0\Big)^2\frac{|Q(y)|^2}{\|Q\|_2^2}\,dy.
\end{split}
\end{equation*}
Since $Q$ is symmetric, we deduce  that
\begin{equation*}
\begin{split}
&\liminf_{k\to\infty}\tau_k^2\int_{\mathbb{R}^2}V(x)\big|u_{q_k}^{t_k}(x)\big|^2\,dx\\
&\geq \frac{1}{A^2\|Q\|_2^2}\int_{\mathbb{R}^2}\big(\frac{|x_0^1|^2|y_1|^2}{b_1^4}
+\frac{|x_0^2|^2|y_2|^2}{b_2^4}\big)|Q(y)|^2\,dy+C_0^2\\
&=\frac{1}{2A^2\|Q\|_2^2}\big(\frac{|x_0^1|^2}{b_1^4}+\frac{|x_0^2|^2}{b_2^4}\big)
\int_{\mathbb{R}^2}|y|^2|Q(y)|^2\,dy+C_0^2\\
&=\frac{1}{2A^2\|Q\|_2^2}\Big[\frac{1}{b_1^2}\big(\frac{|x_0^1|^2}{b_1^2}+\frac{|x_0^2|^2}{b_2^2}
\big)+\frac{|x_0^2|^2}{b_2^2}\big(\frac{1}{b_2^2}-\frac{1}{b_1^2}\big)\Big]
\int_{\mathbb{R}^2}|y|^2|Q(y)|^2\,dy+C_0^2\\
&=\frac{1}{2A^2\|Q\|_2^2}\Big[\frac{1}{b_1^2}A^2+\frac{|x_0^2|^2}{b_2^2}\big(\frac{1}{b_2^2}-\frac{1}{b_1^2}\big)\Big]
\int_{\mathbb{R}^2}|y|^2|Q(y)|^2\,dy+C_0^2\\
&=\frac{1}{2A^2\|Q\|_2^2}\int_{\mathbb{R}^2}|y|^2|Q(y)|^2\,dy
+\frac{|x_0^2|^2}{2A^2\|Q\|_2^2b_2^2}\big(\frac{1}{b_2^2}-\frac{1}{b_1^2}\big)
\int_{\mathbb{R}^2}|y|^2|Q(y)|^2\,dy+C_0^2.
\end{split}
\end{equation*}
By \eqref{1.2-22b}, this implies $x_0^2=0$ and $C_0=0$, and by \eqref{4.57a},  $x_0\in \mathcal{Z}$ and
\begin{equation}\label{1.2-24}
\frac{|x_k|_b}{\varepsilon_kt_k}-\frac{A}{\varepsilon_k}\to0.
\end{equation}
Finally, we further study the limiting behavior of $x_k$. Noting $\varepsilon_q=\|\nabla u_q\|_2^{-1}$, by \eqref{1.2-19}, \eqref{1.2-22}, \eqref{1.2-24} and $x_k\to x_0$, we get
\begin{equation*}
\begin{split}
\lim_{k\to\infty}\tau_k||x_k|_b-A|
&=\lim_{k\to\infty}\frac{||x_k|_b-A|}{\varepsilon_k}\\
&\leq\lim_{k\to\infty}\frac{|x_k|_b}{\varepsilon_kt_k} |t_k-1|+\lim_{k\to\infty}\frac{1}{\varepsilon_kt_k}||x_k|_b-At_k|\\
&\leq\lim_{k\to\infty}\frac{C|x_k|_b}{\varepsilon_kt_k}\frac{\tau_k^{-2}}{q_k-2}+\lim_{k\to\infty}\frac{1}{\varepsilon_kt_k}||x_k|_b-At_k|\\
&=\lim_{k\to\infty}\frac{C\varepsilon_k|x_k|_b}{t_k(q_k-2)}+\lim_{k\to\infty}\frac{1}{\varepsilon_kt_k}||x_k|_b-At_k|\\
&\to 0.
\end{split}
\end{equation*}
The proof is complete.\qquad $\Box$

\bigskip
\vspace{2mm}
\noindent{\bf Acknowledgment} The first author is supported by NNSF of China, No:11671179 and 11771300. The second author is supported by NNSF of China, No:11701260.

{\small

\begin{thebibliography}{99}

\bibitem{AP}A. Azzollini and A. Pomponio, On the Schr\"{o}dinger equation in $R^N$ under the
effect of a general nonlinear term, {\it Indiana Univ. Math. J.}, 58 (2009), no. 3,
1361--1378.

\bibitem{BS} T.Bartsch and N.Soave, A natural constraint approach to normalized solutions of nonlinear Schr\"{o}inger equations and systems, {\it Journal of Functional Analysis}, 272(2017), 4998--5037.

\bibitem{BV} T. Bartsch, S. de Valeriola, Normalized solutions of nonlinear Schr\"{o}inger equations, {\it Arch. Math.}, 100(1) (2012) 75--83.

\bibitem{BJ}   J. Bellazzini, L. Jeanjean, On dipolar quantum gases in the unstable regime, {\it SIAM J. Math. Anal.}, 48,(2016), 2028-2058.

\bibitem{BL}H. Berestycki and P. L. Lions, Nonlinear scalar field equations, I and II, {\it Arch. Rat. Mech. Anal.}, 82 (1983), 313--346, 347--375.

\bibitem{CA}T. Cazenave, Semilinear Schr¡§odinger Equations, Courant Lecture Notes in Mathematics Vol. 10, Courant Institute of Mathematical Science/AMS New York, 2003.

\bibitem{DGPS} F. Dalfovo, S. Giorgini, L.P. Pitaevskii, S. Stringari, Theory of Bose-Einstein condensation
in trapped gases, {\it Rev. Mod. Phys.}, 71,(1999), 463--512.

\bibitem{GH} N. Ghoussoub, Duality and Perturbation Methods in Critical Point Theory, Cambridge Tracts in Mathematics, vol. 107, Cambridge University Press, Cambridge, 1993,
with appendices by David Robinson.

\bibitem{HMDBB} C. Huepe, S. Metens, G. Dewel, P. Borckmans, M.E. Brachet,  Decay rates in attractive
Bose-Einstein condensates. {\it Phys. Rev. Lett.}, 82, (1999), 1616--1619.

\bibitem{GZ} H. Guo, H. Zhou, A constrained variational problem arising in attractive
Bose-Einstein condensate with ellipse-shaped potential, {\it Appl. Math. Lett} 87(2019), 35-41.

\bibitem{GS} Y. J. Guo and R. Seiringer, On the mass concentration for Bose-Einstein
condensates with attractive interactions, {\it Lett. Math. Phys.} 104, (2014), 141--156.

\bibitem{GZZ1}Y. J. Guo, X. Y. Zeng and H. S. Zhou, Energy estimates and symmetry breaking in
attractive Bose-Einstein condensates with ring-shaped potentials, {\it Ann. I. H. Poincare-AN} 33 (2016), 809-828.

\bibitem{GZZ}Y. J. Guo, X. Y. Zeng and H. S. Zhou, Concentration behavior of standing waves for almost mass critical nonlinear Schr\"{o}dinger equations, {\it J. Differential Equations}, 256 (2014), 2079--2100.

\bibitem{JE}L. Jeanjean. Existence of solutions with prescribed norm for semilinear elliptic equations,{\it Nonlinear Anal.}, 28(10) (1997), 1633--1659.

\bibitem{KMS} Y. Kagan, A.E. Muryshev, G.V. Shlyapnikov, Collapse and Bose-Einstein condensation
in a trapped Bose gas with negative scattering length. {\it Phys. Rev. Lett.}, 81, (1998), 933--937.

\bibitem{K}  M.K. Kwong, Uniqueness of positive solutions of $\nabla u -u +u^p=0$ in $\mathbb{R}^N$,  {\it Arch. Ration.
Mech. Anal.}, 105,(1989), 243--266.

\bibitem{HL}Q. Han and F. H. Lin, Elliptic Partial Differential Equations: Second Edition, Courant Lecture Notes in Mathematics Vol. 1, Courant Institute of Mathematical Science/AMS, New York, 2011.

\bibitem{LI}   P. L. Lions, The concentration-compactness principle in the caclulus of variations. The
locally compact case II, {\it Ann. Inst. H. Poincar\'{e} Anal. Non Lin\'{e}aire.}, 1 (1984), 223--283.

\bibitem{NT}W.M. Ni and I. Takagi, On the shape of least-energy solutions to a semilinear Neumann
problem, {\it Comm. Pure Appl. Math.}, 44 (1991), 819--851.

\bibitem{SSH} C.A. Sackett, H.T.C. Stoof, R.G. Hulet,  Growth and collapse of a Bose-Einstein condensate
with attractive interactions. {\it Phys. Rev. Lett.}, 80,(1998), 2031.

\bibitem{W} M. I. Weinstein, Nonlinear Schr\"{o}dinger equations and sharp interpolation estimates, {\it Comm. Math. Phys.}, 87,(1983), 567--576.

\bibitem{W1} M. Willem, Minimax Theorems, Birkhauser, Boston, Basel, Berlin, 1996.


\bibitem{YY} Jianfu Yang and Jinge Yang, Existence and mass concentration of pseudo-relativistic Hartree equation, {\it Journal of Mathematical Physics} 58, 081501 (2017); doi: 10.1063/1.4996576

\end{thebibliography}
\end{document}